\newtheorem{theorem}{Theorem}[section]
\newtheorem{prop}[theorem]{Proposition}
\newtheorem{remark}[theorem]{Remark}
\numberwithin{equation}{section}
\newcommand{\BR}{\mathbb{R}}
\newcommand{\BZ}{\mathbb{Z}}
\newcommand{\BQ}{\mathbb{Q}}
\begin{document}

\title{Sigma Invariants of Direct Products of Groups}

\author{Robert Bieri and Ross Geoghegan}

\address{Department of Mathematics, 
Johann Wolfgang Goethe-Universit\"at Frankfurt, 
D-60054 Frankfurt am Main, 
Germany \newline
\newline
Department of Mathematical Sciences, 
Binghamton University (SUNY), 
Binghamton, NY 13902-6000, USA}
\email{bieri@math.uni-frankfurt.de,
ross@math.binghamton.edu}

\thanks{This work was partially supported by a grant from the Deutsche Forschungsgemeinschaft.}

\subjclass[2000]{Primary 20E06; Secondary 55U25}

\date{May, 2008}

\keywords{Bieri-Neumann-Strebel invariants, Thompson's Group, Product Conjecture}

\begin{abstract} The Product Conjecture for the homological
Bieri-Neumann-Strebel-Renz invariants is proved over a field.  Under
certain hypotheses the Product Conjecture is shown to also hold over
$\BZ$, even though D. Sch\"{u}tz has recently shown that the Conjecture
is false in general over $\BZ$.  Our version over $\BZ$ is applied in
a joint paper with D. Kochloukova \cite{BGK} to show that for all $n$
Thompson's group $F$ contains subgroups of type $F_n$ which are not of
type $FP_{n+1}$.  \end{abstract}

\maketitle

\section{Introduction}\label{Intro}

Let $G$ be a group. A (real additive) {\it character} on $G$ is a homomorphism $\chi :G\to \BR$ from $G$ to the additive group of real numbers. Two non-zero characters $\chi  ,\chi' :
G \to \BR$ are {\it equivalent} if they differ by a positive multiple,
i.e. $\chi' = r\chi$ for some $r > 0$. The equivalence class of $\chi$
is denoted by $[\chi]$.  The equivalence class of a non-zero character
$\chi $ should be thought of as the straight open ray from $0$ through $\chi $ in
the real vector space Hom$(G,\BR)$ of all characters.  The dimension of
this vector space is the torsion-free rank of the abelianization $G/G'$ of $G$. Thus, when $G/G'$ is finitely generated, the set of equivalence classes of non-zero characters, denoted by $S(G)$, is a geometric sphere on which we can do spherical geometry.

Our ground ring $R$ is assumed to be a domain, i.e., a commutative
ring with $1\neq 0$ which is without zero divisors. When we consider
$R$ as an $RG$-module, we always refer to the trivial action of $G$ on $R$.
Each $[\chi]$ defines a submonoid $G_{\chi}:=\{g\in G\: |\:\chi (g)\geq 0\}$,
and $R$ will also be considered as a trivial $RG_{\chi }$-module.   In the
paper \cite{BieriRenz} of  Bieri and  Renz\footnote{In \cite{BieriRenz}
things are done over the ground ring $\BZ$, but everything in that paper
goes through over $R$. In our application to Thompson's Group in \cite{BGK} we
will need to compare the situation over $\BZ$ with that over $\BQ$.} the
$\Sigma$-{\it invariants} (or {\it geometric invariants}) are defined for
each integer $n\geq 0$ by 
$$\Sigma ^{n}(G;R):=\{[\chi ]\in S(G)\: |\:R\text{ is of type } FP_{n}\text{ over } RG_{\chi }\}$$ 
The basic reference for
these is \cite{BieriRenz}. A less economical but more intuitive definition is given in Section \ref{Sigma}. While it is clear that $\Sigma ^{0}(G;R)=S(G)$
the case $n=0$ will play a role in what follows.

There are also homotopical versions of these invariants,
denoted $\Sigma ^{n}(G)$, which were introduced in
\cite[Remark~6.1]{BieriRenz}\footnote{In \cite{BieriRenz} these are
denoted by $^*\Sigma ^{n}(G)$. Again, we have $\Sigma ^{0}(G)=S(G)$. The
case $n=1$ is a recasting of the Bieri-Neumann-Strebel Invariant
introduced earlier in \cite{BNS}.}. The relationship between
$\Sigma^{n}(G;R)$ and $\Sigma ^{n}(G)$ is the usual relationship in
topology between homology with $R$-coefficients and homotopy; see
Theorem \ref{3.3}.

This paper is about the behavior of the $\Sigma$-invariants with respect
to direct products of groups. Consider two groups $G$ and $H$. The vector
spaces Hom$(G\times H,\BR)$ and Hom$(G,\BR)\oplus \text{Hom}(H,\BR)$
are identified in the usual way.  This embeds the spheres $S(G)$
and $S(H)$ canonically as subspheres of $S(G\times H)$ so that when
$\chi \in \text{Hom}(G,\BR )$ and $\chi  '\in \text{Hom}(H,\BR )$ the
notations $\chi +\chi '$ and $(\chi ,\chi ')$ both describe the character
$(g,h)\mapsto \chi (g)+\chi '(h)$. Thus, as the non-zero characters $\chi
$  and $\chi '$ vary in their respective rays $[\chi ]$ and $[\chi ']$,
the point $[\chi +\chi ']\in S(G\times H)$ varies among the points of the
open spherical geodesic whose end points are $[\chi]$ and $[\chi ']$.
With the usual interpretation of  $S(G\times H)$ as the join of $S(G)$
and  $S(H)$, this spherical geodesic is the join-segment from $[\chi ]$
to $[\chi ']$.  Thus, when $P\subseteq S(G)$ and $Q\subseteq S(H)$ their
{\it join} is $$P*Q:=\{[\chi +\chi ']\: |\:[\chi]\in P, [\chi ']\in
Q\}\cup P\cup Q$$ It is sometimes convenient to extend the notations
$\chi +\chi '$ and $(\chi ,\chi ')$ to include the possibility $\chi =0$
or $\chi '=0$ thus collecting the two endpoints of the geodesic in the
same notation. (The reason is that when $\chi '=0$ and $\chi \neq 0$,
or vice versa, $\chi +\chi '$ is a non-zero character on $G\times H$.)

Recall that the group $G$ {\it has type} $FP_n(R)$ if there is a free
$RG$-resolution of $R$ which is finitely generated in dimensions $\leq
n$; when $R=\BZ$ one simply says that $G$ {\it has type }$ FP_n $.
When we discuss $\Sigma^n(G;R)$, {\it we will always assume} that
the group in question has type $FP_{n}(R)$.  Similarly, $\Sigma^{n}(G)$ is only defined when $G$ has the topological finiteness
type\footnote{$G$ has {\it type} $F_n$ if there is a $K(G,1)$-complex
with finite $n$-skeleton.  For details on finiteness properties of groups
see for example \cite{Rossbook}.} $F_n$.

The question of a formula for the homotopical $\Sigma $-invariants of
direct products of groups is reduced to the corresponding question for
the homological invariants by the following:

\begin{theorem} $$ \Sigma^n(G\times H)= (\Sigma^n(G\times H;\BZ)-(S(G)\cup
S(H)))\cup (\Sigma^n(G)\cup \Sigma^n(H)) $$ \end{theorem}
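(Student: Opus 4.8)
The plan is to prove this by combining three ingredients: (i) the Hurewicz-type comparison Theorem~\ref{3.3} between $\Sigma^n(G)$ and $\Sigma^n(G;\BZ)$, which says roughly that for $[\psi]\in S(G)$ one has $[\psi]\in\Sigma^n(G)$ iff $[\psi]\in\Sigma^n(G;\BZ)$ \emph{and} $[\psi]\in\Sigma^2(G)$ (the homotopical condition at level $2$ being the only genuinely non-homological obstruction); (ii) the known behavior of $\Sigma^1$ and $\Sigma^2$ under direct products, together with the fact that a character $\chi+\chi'$ on $G\times H$ with both $\chi\neq 0$ and $\chi'\neq 0$ automatically has $[\chi+\chi']\in\Sigma^\infty(G\times H)$ in the homotopical sense once mild finiteness holds — more precisely, the join-segment points interior to $S(G)*S(H)$ are never an obstruction, because a character that is non-trivial on \emph{both} factors restricts to a monoid $G_\chi\times H_{\chi'}$-picture where one can build the required finiteness from a single factor; and (iii) the homological Product formula already available (or proved elsewhere in this paper) to identify the right-hand side as the correct candidate on the interior. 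The strategy is therefore to check the equality ``pointwise'' on $S(G\times H)$, splitting into the case of points lying on the subspheres $S(G)\cup S(H)$ and the case of interior join points.

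First I would dispose of the interior points: let $[\chi+\chi']\in S(G\times H)$ with $\chi\neq 0$ and $\chi'\neq 0$. On the right-hand side these points, if present, come only from the first set $\Sigma^n(G\times H;\BZ)-(S(G)\cup S(H))$, so it suffices to show that for such points $[\chi+\chi']\in\Sigma^n(G\times H)\iff[\chi+\chi']\in\Sigma^n(G\times H;\BZ)$. By Theorem~\ref{3.3} the forward implication is automatic and the reverse implication needs only $[\chi+\chi']\in\Sigma^2(G\times H)$. Here I would invoke the (known, $n=1,2$) product behavior of the BNS and BNSR invariants: a character non-trivial on both factors always lies in $\Sigma^2$ (indeed in $\Sigma^\infty$) of the product, because the submonoid $(G\times H)_{\chi+\chi'}$ contains, up to finite index considerations, a copy of $G\times H_{\chi'}$ on which $H_{\chi'}$ already supplies arbitrarily high finiteness; this is exactly the ``one good factor suffices'' phenomenon that makes interior join points cheap. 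Hence on the interior the two sides agree.

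Next the boundary points: let $[\psi]\in S(G)$ (the case $[\psi]\in S(H)$ is symmetric), so $\psi=\chi+0$ with $\chi\neq 0$ a character of $G$. On the right-hand side such a point is in the answer iff $[\chi]\in\Sigma^n(G)$, since these points are explicitly removed from the first set and re-inserted only via $\Sigma^n(G)\cup\Sigma^n(H)$. So I must show $[\chi+0]\in\Sigma^n(G\times H)\iff[\chi]\in\Sigma^n(G)$. This is a pure ``extra free direct factor'' statement — adding a group $H$ of the appropriate finiteness type (which we are assuming, since $G\times H$ has type $F_n$) and extending a character by $0$ — and it should follow from the behavior of $\Sigma^n$ under the projection $G\times H\to G$ together with the fact that $RG$ is free over $R[G\times H]$-modules pulled back along... more carefully, from the standard fact that $[\chi\circ p]\in\Sigma^n(G\times H)$ iff $[\chi]\in\Sigma^n(G)$ for the projection $p$, which in turn rests on a tensor/induction argument comparing a $K(G,1)$ and a $K(G\times H,1)$ with the $H$-direction ``going to infinity for free.'' Once both the interior and boundary cases are settled, the set-theoretic identity in the statement follows by inspecting which points of $S(G\times H)$ each side contains.

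The main obstacle I anticipate is the boundary case, and specifically being careful about the direction of the iff when $[\chi]\notin\Sigma^n(G)$: one must genuinely show that the failure of finiteness over $RG_\chi$ is \emph{not} repaired by passing to $R[(G\times H)_{\chi+0}]=R[G_\chi\times H]$. This is the heart of the matter; it amounts to an Eilenberg–Ganea / Mayer–Vietoris style argument (or, homologically, a flat base-change / Künneth computation) showing that $H$ contributes only a (finitely generated, since $H$ has type $F_n$) free factor and cannot raise the finiteness length in the $\chi$-direction. Everything else — the interior points, the reduction via Theorem~\ref{3.3}, and the final bookkeeping of which side contains which points — I expect to be routine given the results already in hand.
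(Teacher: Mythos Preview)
First, note that the paper does not prove this theorem: it merely cites \cite{MMVW2} and \cite[Proposition~4.6]{Bieri}. Your overall plan---split $S(G\times H)$ into the subspheres $S(G)\cup S(H)$ and the interior join points, handle the boundary by a homotopical analogue of Proposition~\ref{extreme}, and handle the interior via the Hurewicz-type comparison $\Sigma^n=\Sigma^n(\cdot;\BZ)\cap\Sigma^2$---is the right architecture and matches how the cited proof proceeds.

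There is, however, a genuine error in your treatment of the interior points. You claim that any $(\chi,\chi')$ with both components nonzero automatically lies in $\Sigma^2(G\times H)$, indeed in $\Sigma^\infty$, because $(G\times H)_{\chi+\chi'}$ ``contains, up to finite index considerations, a copy of $G\times H_{\chi'}$.'' Both the conclusion and the justification are false. The monoid $\{(g,h):\chi(g)+\chi'(h)\geq 0\}$ does \emph{not} contain $G\times H_{\chi'}$: take $g$ with $\chi(g)$ large negative and $h$ with $\chi'(h)=0$. And the blanket conclusion fails already for $G=H=F_2$ (free of rank~$2$): there $\Sigma^1(F_2)=\emptyset$, so $\Sigma^1(G)^c*\Sigma^1(H)^c=S(G\times H)$, whence $\Sigma^2(F_2\times F_2)=\emptyset$ and no interior point lies in $\Sigma^2$. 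What you actually need is only the \emph{conditional} statement that an interior point in $\Sigma^2(G\times H;\BZ)$ also lies in $\Sigma^2(G\times H)$. This does hold, but it is not free: one route is to combine the homotopical Meinert inequality at level~$2$ (forcing an interior point outside $\Sigma^2(G\times H)$ into $\Sigma^1(G)^c*\Sigma^1(H)^c=\Sigma^1(G;\BZ)^c*\Sigma^1(H;\BZ)^c$) with the $p=1$, $n=2$ case of the $\supseteq$-direction of the homological product formula, which places that join inside $\Sigma^2(G\times H;\BZ)^c$. As a minor point, Theorem~\ref{3.3} in this paper is the valuation criterion for $\Sigma^n(G;R)$, not the Hurewicz comparison you invoke; the latter is only alluded to in the introduction and is proved in \cite{BieriRenz}.
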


This first appeared in \cite{MMVW2}; see \cite[Proposition~4.6]{Bieri}
for a proof.

We denote the complement of any subset $A$ of a sphere by $A^c$.
The {\bf Direct Product Formula}
(whether true or false - that is the subject of this paper) reads as follows:

$$ \Sigma^n(G\times H;R)^c =\bigcup^n_{p=0} \Sigma^p(G;R)^c *
\Sigma^{n-p} (H;R)^c$$
								
The $\subseteq$ inclusion of this statement is a theorem due to H. Meinert:

\begin{theorem}(Meinert's Inequality)\label{Meinert}
$$ \Sigma^n(G\times H; R)^c \subseteq \bigcup^n_{p=0} \Sigma^p(G;R)^c *
\Sigma^{n-p} (H;R)^c$$ and $$ \Sigma^n(G\times H)^c \subseteq \bigcup^n_{p=0} \Sigma^p(G)^c *\Sigma^{n-p} (H)^c$$
\end{theorem}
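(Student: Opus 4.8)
The plan is to prove the homological inequality over an arbitrary domain $R$; the homotopical one then follows on specializing to $R=\BZ$ and combining with the product formula for $\Sigma^n(G\times H)$ stated above and with Theorem~\ref{3.3}. Indeed that formula expresses $\Sigma^n(G\times H)^c$ as the union of its ``interior'' part — the points in neither $S(G)$ nor $S(H)$, which is exactly the interior part of $\Sigma^n(G\times H;\BZ)^c$ — with $\Sigma^n(G)^c$ and $\Sigma^n(H)^c$; since $\Sigma^p(G)\subseteq\Sigma^p(G;\BZ)$ by Theorem~\ref{3.3} and joins are monotone, the homological inequality over $\BZ$ feeds straight into this. (Alternatively, the argument below can be rerun with homotopy groups in place of homology.)

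First comes a formal reduction. The invariants are nested, $\Sigma^0(G;R)\supseteq\Sigma^1(G;R)\supseteq\cdots$ with $\Sigma^0(G;R)=S(G)$, so the complements $\Sigma^p(G;R)^c$ increase with $p$ and $\Sigma^0(G;R)^c=\emptyset$. Writing an arbitrary point of $S(G\times H)$ as $[\chi+\chi']$ with $\chi\in\mathrm{Hom}(G,\BR)$, $\chi'\in\mathrm{Hom}(H,\BR)$ and at most one of them zero, the join identity $P*Q=\{[\mu+\mu']:[\mu]\in P,\,[\mu']\in Q\}\cup P\cup Q$ together with a short pigeonhole argument shows that Meinert's Inequality over $R$ is equivalent to the conjunction of two statements. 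The first, call it (E), says: if $H$ has type $FP_n(R)$ and $[\chi]\in\Sigma^n(G;R)$, then $[(\chi,0)]\in\Sigma^n(G\times H;R)$ (and symmetrically in $G$ and $H$); it accounts for the boundary points of $S(G\times H)$. The second, call it (BB), says: if $\chi\ne0$, $\chi'\ne0$, $p+q=n-1$, $[\chi]\in\Sigma^p(G;R)$ and $[\chi']\in\Sigma^q(H;R)$, then $[\chi+\chi']\in\Sigma^n(G\times H;R)$; it accounts for the interior points. (The pigeonhole step is: if $[\chi+\chi']$ with $\chi,\chi'$ non-zero lies in none of the joins $\Sigma^p(G;R)^c*\Sigma^{n-p}(H;R)^c$, then, on letting $a=\max\{p:[\chi]\in\Sigma^p(G;R)\}$, either $a\ge n$ or $[\chi']\in\Sigma^{n-1-a}(H;R)$, which in either case supplies a pair as in (BB) after using nestedness.)

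Statement (E) I would settle by routine homological algebra: since $(G\times H)_{(\chi,0)}=G_\chi\times H$ and $R[G_\chi\times H]=RG_\chi\otimes_R RH$, the $R$-tensor product of a free $RG_\chi$-resolution of $R$ finitely generated in degrees $\le n$ with such a free $RH$-resolution of $R$ is a free $R[G_\chi\times H]$-resolution of $R$ finitely generated in degrees $\le n$; it resolves $R\otimes_R R=R$ because every module involved is $R$-free, hence $R$-flat. The substance is (BB), and I would attack it geometrically. Take models for $G$ and $H$ — finite-in-the-relevant-dimensions CW models, or free $RG$- and $RH$-chain complexes finitely generated in the relevant degrees, to stay within $FP$-hypotheses — carrying equivariant ``height'' functions $\tilde\chi$, $\tilde\chi'$; then $X\times Y$ models $G\times H$ with height function $\tilde\psi=\tilde\chi+\tilde\chi'$, and by the intuitive picture of $\Sigma$ from Section~\ref{Sigma} (see \cite{BieriRenz}) the hypotheses say that the filtrations $\{X^{[a,\infty)}\}$ and $\{Y^{[b,\infty)}\}$ are \emph{essentially} $(p-1)$- and $(q-1)$-acyclic as $a,b\to-\infty$, while the goal is that $\{(X\times Y)^{[t,\infty)}\}$ is essentially $(p+q)$-acyclic. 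The structural point is that
$$(X\times Y)^{[t,\infty)}=\bigcup_{a\in\BR}X^{[a,\infty)}\times Y^{[t-a,\infty)}$$
is the \emph{join} of the two factor filtrations: the members of this cover are products of filtration pieces, every finite intersection is again such a product, and the nerve of the cover is contractible. Running the associated Mayer--Vietoris (nerve) spectral sequence — equivalently, decomposing $(X\times Y)^{[t,\infty)}=A_s\cup B_s$ with $A_s=\{\tilde\chi\ge s\}\cap(X\times Y)^{[t,\infty)}$, $B_s=\{\tilde\chi'\ge t-s\}\cap(X\times Y)^{[t,\infty)}$ and $A_s\cap B_s=X^{[s,\infty)}\times Y^{[t-s,\infty)}$, and varying $s$ — and feeding in the essential acyclicity of the two factors through the Künneth formula should give the conclusion. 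The extra ``$+1$'' is exactly the degree shift produced by the connecting homomorphism of this Mayer--Vietoris situation, just as the classical join of an $m$-connected and an $n$-connected space is $(m+n+2)$-connected because it is homotopy equivalent to the suspension of a smash product: the one Künneth class in total degree $p+q$ not controlled by the $\Sigma^p$- and $\Sigma^q$-hypotheses — the diagonal class in $H_p(X^{[a,\infty)})\otimes H_q(Y^{[b,\infty)})$ — reappears in total degree $p+q+1$ inside $(X\times Y)^{[t,\infty)}$ and so cannot obstruct acyclicity in degrees $\le p+q$.

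The whole difficulty is in (BB). Since the $\Sigma$-condition constrains the \emph{maps} in a direct system of filtration pieces — each of which typically carries arbitrarily large homology — one cannot simply quote the classical connectivity-of-joins theorem; the work is the bookkeeping of levels. Given $t$, one must produce a single $\sigma\ll0$, depending on $t$ alone, so that every cycle of degree $\le p+q$ in $(X\times Y)^{[t,\infty)}$ bounds in $(X\times Y)^{[\sigma,\infty)}$. My route would be: use cocompactness of the actions to upgrade the essential acyclicity of the factor filtrations to a \emph{uniform} one (a fixed gap $D$ with $X^{[a,\infty)}\to X^{[a-D,\infty)}$ homologically trivial in degrees $\le p-1$ for every $a$, similarly for $Y$); then use compactness of a given cycle to reduce its elimination to a finite portion of the Mayer--Vietoris spectral sequence over levels between $t$ and a uniformly determined $\sigma$; and finally absorb the surviving top-degree diagonal term via the join degree shift above. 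Making the finitely many level choices for the $X$- and $Y$-directions compatible — and, in the homotopical version, carrying the $\pi_1$-bookkeeping through van Kampen — is where the technical effort concentrates; the rest is formal.
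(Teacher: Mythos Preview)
The paper does not give its own proof of Meinert's Inequality: immediately after the statement it simply cites \cite{Gehrke2} for the homological version and \cite{Bieri} for the homotopical one. The only related argument in the paper is Proposition~\ref{extreme}, and there the direction needed for Meinert (the ``if'' part) is again just attributed to Meinert's Inequality; the paper only proves the converse, which belongs to the $\supseteq$-direction. So there is nothing here to compare your proposal against line by line.

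That said, your outline is sound and matches the structure of the published proofs. The reduction to (E) and (BB) via the pigeonhole on $a=\max\{p:[\chi]\in\Sigma^p(G;R)\}$ is correct; your treatment of (E) by tensoring finitely generated free resolutions over $RG_\chi$ and $RH$ is cleaner than what the paper needs (the paper, going the other way, uses a chain retraction). Your (BB) sketch correctly identifies the mechanism behind the ``$+1$'': the Mayer--Vietoris decomposition $(X\times Y)^{[t,\infty)}=A_s\cup B_s$ with $A_s\cap B_s=X^{[s,\infty)}\times Y^{[t-s,\infty)}$ is valid, and for a cycle of degree $m\le p+q$ the connecting map lands in degree $m-1\le p+q-1$, where in every K\"unneth summand $H_i\otimes H_j$ at least one index is below the controlled range. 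The genuine work, as you acknowledge, is the level bookkeeping: as $s$ varies, one of the two factor levels moves the wrong way, so one cannot simply pass to a limit, and one must instead iterate the decomposition finitely many times over a compactly supported cycle, choosing the cuts so that each factor only ever relaxes. This can be made to work, but the standard proofs in \cite{Gehrke2} and \cite{Meinert} sidestep the spectral sequence entirely by working directly with the valuation machinery of Section~\ref{Basic}: one builds a partial chain contraction on $\mathbf{F}\otimes_R\mathbf{F}'$ in degrees $\le p+q$ out of the controlled partial contractions on $\mathbf{F}$ (in degrees $<p$) and $\mathbf{F}'$ (in degrees $<q$), and Proposition~\ref{4.1} makes the valuation estimates immediate. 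Given that the whole paper is set up in that language, that route is shorter.

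One small correction: the inclusion $\Sigma^p(G)\subseteq\Sigma^p(G;\BZ)$ you invoke to pass from the homological to the homotopical statement is not Theorem~\ref{3.3} (which characterises $\Sigma^n(G;R)$ via the $CA^{n-1}$ condition); it is the standard Hurewicz-type comparison alluded to in the Introduction. With that fix, your deduction of the homotopical inequality from the homological one via Theorem~1.1 is correct.
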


Meinert did not publish this, but a proof can be found in
\cite[Section~9]{Gehrke2}. The paper \cite{Bieri} also contains a proof of
the homotopy version\footnote{That our statement of Meinert's Inequality
is equivalent to the statement in \cite{Gehrke2} requires a little work.
The case where one of the characters is zero is covered by our Proposition
\ref{extreme}.  The other case is a straightforward exercise.}.

In this paper we consider the $\supseteq$ inclusion of the Direct
Product Formula.  Here, caution is needed as there are counterexamples.  A counterexample to the homotopy version was given by Meier, Meinert and vanWyk in \cite[Section~6]{MMVW}.
Recently, a counterexample to the homological version has been given by D. Sch\"{u}tz in \cite{Dirk} for the case where $R=\BZ$.  This involves the product of two right-angled Artin groups.  (The article \cite{Bieri} contains the {\it incorrect} statement - for which R.B. owes an apology - that the computation of $\Sigma ^n(G,\BZ)$ for right angled Artin groups $G$ given in \cite{MMVW} -- see also \cite{BuxG} -- establishes the $\supseteq$ direction of the Direct Product Formula for those groups when $R=\BZ$.)

In this paper we prove:

\begin{theorem}\label{Thm1} When $R$ is a field, the Direct Product Formula is true.
\end{theorem}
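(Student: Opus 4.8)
My plan is to prove the only outstanding inclusion, namely
$$\bigcup_{p=0}^{n}\Sigma^{p}(G;R)^{c}*\Sigma^{n-p}(H;R)^{c}\ \subseteq\ \Sigma^{n}(G\times H;R)^{c},$$
when $R$ is a field. Equivalently, I must show that if $[\chi+\chi']\in S(G\times H)$ satisfies $[\chi+\chi']\in\Sigma^{n}(G\times H;R)$, then for every decomposition $n=p+(n-p)$ one has $[\chi]\in\Sigma^{p}(G;R)$ or $[\chi']\in\Sigma^{n-p}(H;R)$. (The extreme cases $\chi=0$ or $\chi'=0$ should be handled separately, presumably via the analogue of Proposition \ref{extreme} referred to above; the interesting case is $\chi\neq 0\neq\chi'$.) The natural tool is the algebra of chain complexes over the monoid rings $RG_{\chi}$, $RH_{\chi'}$ and $R(G\times H)_{\chi+\chi'}$, together with the fact that over a \emph{field} one has a Künneth theorem with no $\mathrm{Tor}$ correction terms.

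\textbf{Key steps.} First I would fix finite-type free resolutions and pass to the ``$\Sigma$-reformulation'' in terms of finite generation of homology of the complexes $F_{*}\otimes_{RG}RG_{\chi}$ etc.\ (the Bieri--Renz criterion); over a field, finiteness of such homology in degrees $\le n$ is what $FP_{n}$ over the monoid ring amounts to. Second, I would take the tensor product resolution $C_{*}=F_{*}\otimes_{R}F'_{*}$ of $R$ over $R(G\times H)$; the monoid $(G\times H)_{\chi+\chi'}$ contains $G_{\chi}\times H_{\chi'}$ but is strictly larger, so the comparison is not literally a tensor product of the two ``localized'' complexes, and this mismatch is the crux. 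The workaround is to compute $H_{*}(G\times H;R(G\times H)_{\chi+\chi'})$ by a spectral-sequence / filtration argument: filter $R(G\times H)_{\chi+\chi'}$ by the value of $\chi$ (or of $\chi'$), obtaining a first-quadrant spectral sequence whose $E^{1}$ or $E^{2}$ page is built from the homology of $G_{\chi}$ with coefficients in modules that are, up to the $H$-direction, free; over a field this collapses to a genuine Künneth formula
$$H_{k}\bigl(G\times H;R(G\times H)_{\chi+\chi'}\bigr)\ \cong\ \bigoplus_{p+q=k}H_{p}(G;RG_{\chi})\otimes_{R}H_{q}(H;RH_{\chi'}),$$
at least after checking the module-finiteness bookkeeping. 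Third, from this isomorphism, if the left side is finitely generated for all $k\le n$ while, say, $[\chi]\notin\Sigma^{p}(G;R)$ and $[\chi']\notin\Sigma^{n-p}(H;R)$ for some $p$, I would extract a contradiction: choose $p$ minimal with $H_{p}(G;RG_{\chi})$ not finitely generated and $q=n-p$ minimal with $H_{q}(H;RH_{\chi'})$ not finitely generated (here the field hypothesis makes ``not $FP_{n}$'' equivalent to ``homology not finitely generated in the relevant degree,'' which is the subtle point hidden behind Schütz's counterexample over $\BZ$), and observe that the $(p,q)$-summand of $H_{p+q}$ is then an infinitely generated tensor product of two modules over the fields $R$ — forcing $H_{p+q}(G\times H;\dots)$ to be infinitely generated, contradicting $[\chi+\chi']\in\Sigma^{n}(G\times H;R)$.

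\textbf{Main obstacle.} The genuinely hard part is the second step: justifying the collapse to a clean Künneth isomorphism for the homology with coefficients in the \emph{enlarged} monoid ring $R(G\times H)_{\chi+\chi'}$, which is \emph{not} the tensor product $RG_{\chi}\otimes_{R}RH_{\chi'}$ but rather a ``completion/enlargement'' of it (it contains all $(g,h)$ with $\chi(g)+\chi'(h)\ge 0$, including pairs where one coordinate is negative). I expect the right device is a filtration of this monoid ring by half-spaces, yielding a spectral sequence in which, over a field, every page is a direct sum of tensor products and the higher differentials must vanish for dimension/degree reasons — but making the filtration locally finite enough to control finite generation, and checking that the associated graded really does reproduce $H_{*}(G;RG_{\chi})\otimes H_{*}(H;RH_{\chi'})$ with the correct module structure, is where the real work lies. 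The field hypothesis enters decisively both in killing $\mathrm{Tor}$-terms in the Künneth formula and in letting ``infinitely generated homology'' propagate across the tensor product without cancellation, which is exactly the mechanism that fails over $\BZ$.
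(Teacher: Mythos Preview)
Your proposal has a genuine gap, and it is exactly at the point you yourself flag as the ``main obstacle.''

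\textbf{The reformulation is not well-posed.} You write the criterion as ``finite generation of homology of the complexes $F_{*}\otimes_{RG}RG_{\chi}$,'' but $RG_{\chi}$ is a subring of $RG$, not an $RG$-module, so this tensor product is undefined. If you instead mean $\mathrm{Tor}^{RG_{\chi}}_{*}(R,R)$, finite generation of these groups is \emph{not} equivalent to $R$ being of type $FP_{n}$ over $RG_{\chi}$: the monoid ring $RG_{\chi}$ is almost never coherent, so finitely generated homology does not force finitely presented syzygies. The Bieri--Renz criterion actually used in the paper is the $CA^{n-1}$ condition on the valuation filtration (Theorem~\ref{3.3}), which is not a ``homology finitely generated'' statement.

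\textbf{The K\"unneth isomorphism you want does not hold.} Even under the most charitable reading---replace $R(G\times H)_{\chi+\chi'}$ by the valuation subcomplex $(\mathbf{F}\otimes_{R}\mathbf{F}')_{w}^{[0,\infty]}$ for the basic valuation $w$ extending $\chi+\chi'$---this subcomplex is \emph{strictly larger} than $\mathbf{F}_{v}^{[0,\infty]}\otimes_{R}(\mathbf{F}')_{v'}^{[0,\infty]}$, since $gx\otimes hx'$ belongs to it whenever $v(gx)+v'(hx')\ge 0$, allowing one coordinate to be arbitrarily negative. Filtering by the value of $\chi$ gives an associated graded that is an \emph{infinite} direct sum of shifted copies of the factor complexes, and the spectral sequence has no reason to degenerate, nor to yield finitely generated output from finitely generated input on each slice. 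The diagonal half-space simply does not decompose as a product, and you offer no mechanism to control it.

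\textbf{How the paper avoids this.} The paper never attempts a global K\"unneth decomposition of the diagonal half-space. It works directly with the $CA^{n-1}$ criterion: from cycles $z\in\mathbf{F}$ and $z'\in\mathbf{F}'$ that are expensive to fill, it builds the explicit $(n-1)$-cycle $\partial(c\otimes c')$ in $\mathbf{F}\otimes_{R}\mathbf{F}'$. Given an \emph{arbitrary} filling $d$, a combinatorial splitting of $d$ by two independent thresholds $u$ (in the $v$-direction) and $u'$ (in the $v'$-direction) extracts a chain witnessing that $z\otimes z'$ bounds in the \emph{honest rectangular} tensor product $\mathbf{F}_{v}^{[u,\infty]}\otimes_{R}(\mathbf{F}')_{v'}^{[u',\infty]}$. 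Only here, on a genuine tensor product, is the K\"unneth theorem over a field invoked, to conclude that $z\otimes z'$ cannot bound unless $z$ or $z'$ does. This forces $d$ to have support reaching below both thresholds, giving a lower bound on $w(\partial d)-w(d)$ that can be made arbitrarily large.

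In short: the field hypothesis enters precisely where you expect (K\"unneth with no $\mathrm{Tor}$), but it is applied to a rectangular sublevel set, not the diagonal one. The passage from the diagonal filling $d$ back to a rectangular region is the real content of the argument, and your spectral-sequence sketch does not supply it.
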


As a corollary we have:

\begin{theorem}\label{Thm2} When $\Sigma ^{p}(G;\BZ)=\Sigma ^{p}(G;\BQ)$ and 
$\Sigma ^{p}(H;\BZ)=\Sigma ^{p}(H;\BQ)$ for all $p\leq n$ then the $\BZ$-version of the
Direct Product Formula is true; i.e., $$\Sigma ^{n}(G\times H; \BZ)^{c}
=\bigcup^n_{p=0}\Sigma ^p(G;\BZ)^{c} * \Sigma^{n-p}(H;\BZ )^{c}$$ 
\end{theorem}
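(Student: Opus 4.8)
The plan is to deduce Theorem~\ref{Thm2} formally from Theorem~\ref{Thm1} and Meinert's Inequality; no new group theory is needed. The one auxiliary fact to record is that if $\BZ$ is of type $FP_{p}$ over $\BZ G_{\chi}$ then $\BQ$ is of type $FP_{p}$ over $\BQ G_{\chi}$: apply the exact functor $\BQ\otimes_{\BZ}-$ to a free $\BZ G_{\chi}$-resolution of $\BZ$ which is finitely generated in degrees $\leq p$, observing that it carries free $\BZ G_{\chi}$-modules to free $\BQ G_{\chi}$-modules of the same rank and sends the trivial module $\BZ$ to $\BQ$. Hence $\Sigma^{p}(K;\BZ)\subseteq\Sigma^{p}(K;\BQ)$, equivalently $\Sigma^{p}(K;\BQ)^{c}\subseteq\Sigma^{p}(K;\BZ)^{c}$, for every group $K$ of type $FP_{p}(\BZ)$. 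This applies to $G$, to $H$, and to $G\times H$ (the last of type $FP_{n}(\BZ)$ because $G$ and $H$ are), so every $\Sigma$-set over $\BQ$ occurring below is defined.

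Next I would split the asserted equality into its two inclusions. The inclusion $\Sigma^{n}(G\times H;\BZ)^{c}\subseteq\bigcup_{p=0}^{n}\Sigma^{p}(G;\BZ)^{c}*\Sigma^{n-p}(H;\BZ)^{c}$ is exactly the homological half of Meinert's Inequality (Theorem~\ref{Meinert}) with ground ring $R=\BZ$, and uses nothing from the hypothesis. For the reverse inclusion, take $[\psi]\in\bigcup_{p=0}^{n}\Sigma^{p}(G;\BZ)^{c}*\Sigma^{n-p}(H;\BZ)^{c}$. The hypothesis $\Sigma^{p}(G;\BZ)=\Sigma^{p}(G;\BQ)$ and $\Sigma^{p}(H;\BZ)=\Sigma^{p}(H;\BQ)$ for $p\leq n$ lets me rewrite this, after passing to complements and forming joins, as $[\psi]\in\bigcup_{p=0}^{n}\Sigma^{p}(G;\BQ)^{c}*\Sigma^{n-p}(H;\BQ)^{c}$. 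Since $\BQ$ is a field, Theorem~\ref{Thm1} identifies that union with $\Sigma^{n}(G\times H;\BQ)^{c}$, so $[\psi]\in\Sigma^{n}(G\times H;\BQ)^{c}$. By the base-change remark of the first paragraph, $\Sigma^{n}(G\times H;\BQ)^{c}\subseteq\Sigma^{n}(G\times H;\BZ)^{c}$, whence $[\psi]\in\Sigma^{n}(G\times H;\BZ)^{c}$, as required.

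I do not expect a real obstacle here: granting Theorem~\ref{Thm1}, the argument is a chain of equalities and inclusions. The only places calling for any care are the $FP_{n}$ base-change statement (routine, via flatness of $\BQ$ over $\BZ$) and the finiteness bookkeeping needed to make sense of the $\BQ$-invariants, both dealt with in the first paragraph.
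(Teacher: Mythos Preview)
Your proof is correct and follows essentially the same approach as the paper: Meinert's Inequality gives one inclusion, and for the other you pass to $\BQ$ via the hypothesis, invoke Theorem~\ref{Thm1} over the field $\BQ$, and then use the general inclusion $\Sigma^{p}(\,\cdot\,;\BZ)\subseteq\Sigma^{p}(\,\cdot\,;\BQ)$ applied to $G\times H$. The paper's version is terser, merely citing this last inclusion without the base-change justification you supply, but the logic is identical.
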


\begin{proof} This follows from Meinert's Inequality (Theorem
\ref{Meinert}) together with the fact that one always has:  $\Sigma
^{p}(.;\BZ)\subseteq \Sigma ^{p}(.;\BQ)$.  
\end{proof}

Theorem \ref{Thm2} is applied in \cite{BGK} to get information about
subgroups of Thompson's Group $F$.  There the $\Sigma $-invariants
of $F$ are computed, and one has $\Sigma ^{p}(F;\BZ)=\Sigma ^{p}(F;\BQ)$ for all $p$, so that, by Theorem \ref{Thm2}, the $\BZ$ version of the Direct Product Formula is true with $G=H=F$. A consequence
is that, for all $n$, $F$ contains subgroups of type\footnote{In general the $F_n$
property is stronger than the $FP_n$  property.} $F_n$  which
are not of type $FP_{n+1}$

A corollary of the proof of Theorem \ref{Thm1}, also proved by Sch\"{u}tz in \cite{Dirk}, is:

\begin{theorem}\label{Thm3} The Direct Product Formula is true when $R=\BZ$ provided $n\leq 3$.  
\end{theorem}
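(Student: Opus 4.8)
The plan is to derive Theorem~\ref{Thm3} from the field case, Theorem~\ref{Thm1}, by a universal--coefficient argument. Meinert's Inequality (Theorem~\ref{Meinert}) supplies the $\subseteq$ inclusion, so only the $\supseteq$ inclusion is at issue, and for this it suffices to show, for every $n\le 3$ and every $p$ with $0\le p\le n$, that
$$\Sigma^p(G;\BZ)^c*\Sigma^{n-p}(H;\BZ)^c\ \subseteq\ \Sigma^n(G\times H;\BZ)^c ,$$
and then take the union over $p=0,\dots,n$. Write $\mathcal{F}=\{\BQ\}\cup\{\BF_q : q\text{ prime}\}$. The argument rests on two inputs. (i) For every group $K$ of type $FP_m(\BZ)$ one has the universal--coefficient relation $\Sigma^m(K;\BZ)^c=\bigcup_{k\in\mathcal{F}}\Sigma^m(K;k)^c$. (ii) $\Sigma^0(K;R)=S(K)$ for every $R$, and $\Sigma^1(K;R)$ is independent of the coefficient domain $R$ (see \cite{BNS}, \cite{BieriRenz}); hence $\Sigma^0$ and $\Sigma^1$ over $\BZ$ coincide with their versions over each $k\in\mathcal{F}$.

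The real ingredient is (i). I would deduce it from the homological Novikov characterization of the $\Sigma$-invariants: $[\chi]\in\Sigma^m(K;R)$ if and only if the Novikov homology groups $H_i(K;\widehat{RK}_\chi)$ vanish for all $0\le i\le m$, where $\widehat{RK}_\chi$ is the appropriate Novikov completion of $RK$ determined by $\chi$. Since $\widehat{\BZ K}_\chi$ is torsion free as an abelian group and satisfies $\widehat{\BZ K}_\chi\otimes_{\BZ}\BQ\cong\widehat{\BQ K}_\chi$ and $\widehat{\BZ K}_\chi\otimes_{\BZ}\BF_q\cong\widehat{\BF_q K}_\chi$, the chain complex $F_\bullet\otimes_{\BZ K}\widehat{\BZ K}_\chi$ --- with $F_\bullet\to\BZ$ a free $\BZ K$-resolution --- is degreewise free over $\BZ$, and tensoring it with $\BQ$, resp.\ with $\BF_q$, computes $H_*(K;\widehat{\BQ K}_\chi)$, resp.\ $H_*(K;\widehat{\BF_q K}_\chi)$. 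The classical Universal Coefficient Theorem then gives that $H_i(K;\widehat{\BZ K}_\chi)=0$ for all $i\le m$ exactly when $H_i(K;\widehat{\BQ K}_\chi)=0$ for all $i\le m$ and $H_i(K;\widehat{\BF_q K}_\chi)=0$ for all $i\le m$ and all primes $q$; this is precisely (i).

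Now fix $n\le 3$ and $0\le p\le n$. Since $n\le 3$, at least one of $p$ and $n-p$ is $\le 1$; interchanging $G$ and $H$ if necessary (the statement is symmetric), we may assume $n-p\le 1$. Then, by (ii), $\Sigma^{n-p}(H;\BZ)^c=\Sigma^{n-p}(H;k)^c$ for every $k\in\mathcal{F}$, while by (i), $\Sigma^p(G;\BZ)^c=\bigcup_{k\in\mathcal{F}}\Sigma^p(G;k)^c$. As the join with a fixed subset of $S(G\times H)$ distributes over unions, these two identities give
$$\Sigma^p(G;\BZ)^c*\Sigma^{n-p}(H;\BZ)^c\ =\ \bigcup_{k\in\mathcal{F}}\bigl(\Sigma^p(G;k)^c*\Sigma^{n-p}(H;k)^c\bigr).$$
For each field $k\in\mathcal{F}$, Theorem~\ref{Thm1} gives $\Sigma^p(G;k)^c*\Sigma^{n-p}(H;k)^c\subseteq\Sigma^n(G\times H;k)^c$ (this is one summand of the Direct Product Formula over $k$); hence the right-hand side lies in $\bigcup_{k\in\mathcal{F}}\Sigma^n(G\times H;k)^c$, which by (i) applied to $G\times H$ equals $\Sigma^n(G\times H;\BZ)^c$. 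This is the desired inclusion, so the union over $p$ finishes the proof.

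The decisive point --- and the reason the argument stops at $n=3$ --- is that for $n\le 3$ one of $p,n-p$ is $\le 1$, so the corresponding $\Sigma$-term is coefficient independent; this is exactly what permits the union over $\mathcal{F}$ to be pulled outside the join without producing uncontrolled mixed terms $\Sigma^p(G;\BF_q)^c*\Sigma^{n-p}(H;k')^c$ with $k'\ne\BF_q$. Such mixed terms appear once $n\ge 4$, and they are precisely what makes the Formula fail over $\BZ$ in general, as in Sch\"utz's example. Thus the substantive work, beyond quoting Theorem~\ref{Thm1}, is to nail down (i): the homological Novikov description of $\Sigma^m(K;R)$, the $\BZ$-flatness and base-change behaviour of the Novikov modules, and the invocation of the ordinary Universal Coefficient Theorem; input (ii) and the distributivity of joins over (nonempty) unions are routine by comparison.
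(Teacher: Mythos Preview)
Your reduction strategy is genuinely different from the paper's, and it is attractive, but the linchpin (i) is not established by the argument you give. The Universal Coefficient Theorem applied to the complex $C_\bullet=F_\bullet\otimes_{\BZ K}\widehat{\BZ K}_\chi$ (which is indeed a complex of torsion-free, hence flat, $\BZ$-modules) yields
\[
H_i(C_\bullet\otimes_\BZ k)\;\cong\;\bigl(H_i(C_\bullet)\otimes_\BZ k\bigr)\ \oplus\ \mathrm{Tor}^{\BZ}_1\!\bigl(H_{i-1}(C_\bullet),k\bigr).
\]
Assuming $H_i(C_\bullet\otimes k)=0$ for all $i\le m$ and all $k\in\mathcal F$, you correctly get $H_i(C_\bullet)=0$ for $i\le m-1$ (torsion from $\BQ$, torsion-free from the $\mathrm{Tor}$-terms with $\BF_q$). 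But for $i=m$ you only obtain that $H_m(C_\bullet)$ is torsion and divisible, since the $\mathrm{Tor}$-condition controlling $q$-torsion in $H_m$ lives in $H_{m+1}(C_\bullet\otimes\BF_q)$, about which you have no information. A torsion divisible abelian group need not vanish (e.g.\ $\BQ/\BZ$), so (i) does not follow. You would need an extra argument---specific to Novikov modules, or an $FP_{m+1}$ hypothesis you do not have when $p=n$---to kill this top-degree obstruction, and no such argument is supplied. Without (i) your chain of inclusions breaks at the very first step $\Sigma^p(G;\BZ)^c=\bigcup_{k}\Sigma^p(G;k)^c$.

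By contrast, the paper does not attempt any coefficient comparison of this kind. It observes that in the proof of Proposition~\ref{5.1} the field hypothesis is used at exactly one point: the K\"unneth step guaranteeing that the class of $z\otimes z'$ is nonzero in $H_*(\mathbf F_v^{[u,\infty]}\otimes\mathbf F'^{[u',\infty]}_{v'})$. Over $\BZ$ this still holds provided $\{z\}\otimes\{z'\}\neq 0$ in the tensor product of the homology groups. For $n\le 3$ and $1\le p\le n-1$ the degrees $(p-1,n-p-1)$ of $z$ and $z'$ lie in $\{0,1\}$ with at least one equal to $0$; since $H_0$ of the truncated complex is free abelian, the $0$-cycle can be chosen to represent a primitive (indivisible) class, and then $\{z\}\otimes\{z'\}\neq 0$ whenever the other class is nonzero. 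This sidesteps the top-degree problem entirely. If you want to salvage your approach, the honest task is to prove (i) for $m=2$ under the standing $FP_3$ hypothesis on $G$---that is the only nontrivial instance your argument actually needs for $n\le 3$---but as written the proof is incomplete.
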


This is explained in Remark \ref{rational}.

We wish to acknowledge the roots of the present paper. Computing $ \Sigma^n(G\times H; \BZ)$ was a much discussed theme in Frankfurt around 1990, and in this paper we take full advantage of those discussions, more than we are able to track down in the literature. We can certainly refer to Holger Meinert's Diplome Thesis \cite{Meinert} which contains the $\BZ$-version of his Inequality, stated above in Theorem \ref{Meinert}.  We can also refer to Ralf Gehrke's doctoral thesis \cite{Gehrke} and \cite{Gehrke2} which implies equality in the $\BZ$-version in some special cases.  The survey article \cite{Bieri} contains more details, including a proof of Gehrke's result.

\section{Valuations extending a character}\label{Basic}

Let $\chi :G\to \BR$ be a character on $G$.  If $M$ is an $RG$-module, a {\it valuation on} $M$ {\it extending} $\chi$ is a function $v : M\to \BR \cup\{\infty\}$ satisfying the axioms

\begin{equation*}\begin{aligned}
 v(m+m') &\geq \min\{v(m),v(m')\}\\
 v(gm) &= \chi(g) + v(m)\\
 v(rm) &= v(m) \ \text{ when }r\ \text{ is a unit in }\ R\\
 v(0) &= \infty 
\end{aligned}\end{equation*}
for all $m,m' \in M$, $g\in G$, $r\in R$.

The {\it support} of $c\in F_i$ is the subset supp$(c)$ of the $R$-basis $G\mathcal{X}_i$
consisting of those members which appear with non-zero coefficients in the unique expansion of $c$.

The most important example of a valuation involves extension via supports: Given a free $RG$-module $F$ and an $RG$-basis $\mathcal{X}$ for $F$, every function $v : \mathcal{X} \to \BR$ extends to a unique function $v : G\mathcal{X} \to \BR \cup
\{\infty\}$ by $v(gx) := \chi(g) + v(x)$, and then to a valuation on $F$
by defining, for each non-zero $c\in F$, $v(c) =v(\text {\rm supp}(c))$.

Let $\mathbf F \twoheadrightarrow R$ be a free resolution of the trivial $RG$-module $R$.  We call this resolution {\it admissible} if: (i) each free $RG$-module $F_i$ in $\mathbf F$ comes with a given basis $\mathcal{X}_i$, and (ii) for each $x\in \mathcal{X}_i$, $\partial x
\neq 0 \in F_{i-1}$, while for  $x\in \mathcal{X}_0$, $\epsilon (x)=1\in R$, where 
$\epsilon :F_{0}\to R$ denotes the augmentation map. We write $\mathbf{F}^{(n)}$ for the $n$-{\it skeleton} ${\bigoplus^n_{i=0}}F_i$ which is free
with basis $\mathcal{X}^{(n)} = {\coprod_{i=0}}\mathcal{X}_i$. When $G$ has type $FP_n$ there is always an admissible free resolution with finitely generated $n$-skeleton.

If $\mathbf{F} \twoheadrightarrow R$ is admissible we think of
$\mathbf{F}$ as ${\bigoplus_{i\geq 0}}F_i$, a free module with basis
$\mathcal{X} = {\coprod_{i\geq 0}}\mathcal{X}_i$.  {\it The basic
valuation on} $\mathbf{F}$ {\it extending} $\chi$ is the function
$v_{\chi }: \mathbf{F} \to \BR \cup \{\infty\}$ of the kind described
above, where the values on $\mathcal{X}$ are chosen
inductively, skeleton by skeleton, to satisfy:

$$v_{\chi}(x) = \begin{cases} 0 &\text{if $i =0$}\cr\\
v_{\chi}(\partial x) &\text{if $i > 0$}\end{cases}$$

and $v_{\chi }(gx)=\chi (g)+v_{\chi }(x)$ when $g\in G$.
It follows that $v_{\chi}(c) \leq v_{\chi}(\partial c)$ for all $c\in \mathbf{F}$, and $v_{\chi}(c) = \infty$ if and only if $c=0$.

One shows easily that if $\mathbf{F}$ has finitely generated $n$-skeleton
then every valuation  $v:{\mathbf F}^{(n)}\to {\mathbf R}$ on the
$n$-skeleton extending $\chi $ is {\it dominated} by the basic valuation $v_{\chi }$
in the sense that there is a number $\mu \geq 0$ with $v(c)\geq v_{\chi
}(c)-\mu $ for all $c\in {\mathbf F}^{(n)}$.

We note that when $\chi =0$
then $v$ takes all non-zero elements of $\mathbf{F}$ to $0$.

\section{The Invariant $\Sigma^n(G;R)$}\label{Sigma}

In this section we recall another definition of  $\Sigma ^n(G;R)$. Let
$\mathbf{F} \twoheadrightarrow R$ be an admissible free resolution
with $\mathcal{X}^{(n)}$ finite.  We denote
by $\mathbf{\tilde F}$ the augmented (exact) chain complex $\mathbf{F}\to
R \to 0$; i.e. $\tilde {F_i} =F_i$ for $i\neq -1$, $\tilde {F}_{-1}
= R$, and we write $\epsilon :F_{0}\to R$ for the augmentation map.
All valuations $v$ on $\mathbf{F}$ are extended to $\mathbf{\tilde F}$
by the convention $v(r) =\infty$ for every $r\in R$.

If $v$ is a basic valuation extending the non-zero character $\chi$, the property $v(c)\leq v\partial (c)$ ensures that $\mathbf{\tilde F}$ carries an $\BR$-graded filtration by $R$-subcomplexes $\mathbf{\tilde F}_v^{[t,\infty]}$ where $t\in \BR$ and ${\tilde F}_{v,i}^{[t,\infty]} := \{c \in {\tilde F}_i \mid v(c) \geq t\}$.  In particular,  ${\tilde F}_{v,-1}^{[t,\infty]} = R$ for all $t$.

We say that $\mathbf{\tilde F}$ is {\it controlled} $(n-1)$-{\it
acyclic} (abbrev.  $CA^{n-1}$) {\it with respect to} $v$ if there exists $\lambda
\geq 0$ such that, for all $t$ and all $0\leq p \leq n-1$, the inclusion
induces the zero homomorphism $$H_p({\mathbf{\tilde F}}_v^{[t,\infty]})
\to H_p(\mathbf{\tilde F}^{[t-\lambda,\infty]}_v).$$ This condition should be considered vacuous when $n=0$; i.e. $\mathbf{\tilde F}$ is always $CA^{-1}$ with respect to $v$.  

\begin{remark}\label{3.1}  \rm{If the $CA^{n-1}$-condition holds for some $t$ then $\lambda$ can be chosen so that it holds, using this same $\lambda $, for every $t$}. This is because a non-zero character defines a cocompact action on $\mathbb R$.  
\end{remark}

The following is the content of \cite[Theorem~3.2]{BieriRenz}:

\begin{theorem}\label{3.3}  Let $\chi : G \to \BR$ be a non-zero character.  If
$\mathbf{F} \twoheadrightarrow R$ is an admissible free resolution with finitely generated
$n$-skeleton, and $v :{\mathbf{F}} \to \BR \cup \{\infty\}$ is a basic valuation extending $\chi$,
then $[\chi ]\in \Sigma ^{n}(G;R)$ if and only if $\mathbf{\tilde F}$ is $CA^{n-1}$ with respect to $v$.
\hfill$\square$
\end{theorem}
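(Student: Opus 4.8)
The plan is to read both sides of the claimed equivalence as statements about partial free resolutions of $R$ over the monoid ring $S:=RG_{\chi}$. By definition $[\chi]\in\Sigma^{n}(G;R)$ says exactly that $R$ is of type $FP_{n}$ over $S$, i.e.\ (after the usual $FP_{n}$ rearrangement) that there is a partial free $S$-resolution $C_{n}\to\cdots\to C_{0}\to R\to 0$ with each $C_{i}$ finitely generated. The geometric side enters through the observation that each $\mathbf{\tilde F}_{v}^{[t,\infty]}$ is an $S$-subcomplex of $\mathbf{\tilde F}$: for $g\in G_{\chi}$ we have $\chi(g)\ge 0$, hence $v(gc)=\chi(g)+v(c)\ge v(c)$. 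Thus $\mathbf{\tilde F}$ is filtered by $S$-subcomplexes with $\mathbf{\tilde F}=\varinjlim_{t\to-\infty}\mathbf{\tilde F}_{v}^{[t,\infty]}$, the colimit being exact, and $\tilde F_{0,v}^{[0,\infty]}$ is the free $S$-module on the (finite, since $\mathcal{X}^{(n)}$ is finite) set $\mathcal{X}_{0}$. In this language $CA^{n-1}$ with respect to $v$ says precisely that the directed system $\{\mathbf{\tilde F}_{v}^{[t,\infty]}\}_{t}$ is ``essentially acyclic'' in degrees $\le n-1$ with a single uniform constant $\lambda$; by Remark \ref{3.1} it is enough to verify this at one value of $t$.

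For the implication $[\chi]\in\Sigma^{n}(G;R)\Rightarrow CA^{n-1}$, I would fix a finite partial free $S$-resolution $\mathbf{C}\to R$ as above, so that $H_{p}(\mathbf{\tilde C})=0$ for $p\le n-1$, and lift $\mathrm{id}_{R}$ to an $S$-chain map $\varphi\colon\mathbf{\tilde C}\to\mathbf{\tilde F}$. Because $\mathbf{C}$ is finitely generated over $S$, the images $\varphi(C_{i})$ lie in $\tilde F_{i,v}^{[-\mu,\infty]}$ for a single $\mu\ge 0$; comparing the two resolutions of $R$ in the other direction and producing the connecting chain homotopies likewise yields $S$-linear data that move valuations by at most a fixed amount. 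Given a $p$-cycle $z\in\tilde F_{p,v}^{[t,\infty]}$ with $p\le n-1$, one pushes it into $\mathbf{C}$ (losing a bounded amount of valuation), uses that it must bound there since $\mathbf{C}$ is acyclic in this range, pushes the bounding chain back, and corrects by the homotopy; unravelling exhibits $z$ as a boundary in $\tilde F_{p+1,v}^{[t-\lambda,\infty]}$ with $\lambda$ depending only on the bounds above, which is $CA^{n-1}$.

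For the converse $CA^{n-1}\Rightarrow[\chi]\in\Sigma^{n}(G;R)$, I would build the finite partial free $S$-resolution by induction on degree, starting from $C_{0}:=\tilde F_{0,v}^{[0,\infty]}$ (free over $S$ on $\mathcal{X}_{0}$, with $\partial_{0}=\epsilon$, which is onto since $\epsilon(x)=1$). Suppose $C_{p}\to\cdots\to C_{0}\to R$ has been produced, finitely generated free over $S$, exact, with a compatible $S$-chain map into $\mathbf{\tilde F}$ whose image in degree $p$ lies in $\tilde F_{p,v}^{[-b,\infty]}$. The kernel $Z_{p}\subseteq C_{p}$ maps to $\partial$-cycles in $\tilde F_{p,v}^{[-b,\infty]}$, which by $CA^{n-1}$ (valid for $p\le n-1$) all lie in $\partial\,\tilde F_{p+1,v}^{[-b-\lambda,\infty]}$; since $\mathcal{X}_{p+1}$ is finite one then selects finitely many elements of $\tilde F_{p+1,v}^{[-b-\lambda,\infty]}$ whose boundaries generate $Z_{p}$ over $S$, lets $C_{p+1}$ be the free $S$-module on these, and maps it onto $Z_{p}\subseteq C_{p}$. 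Iterating up to degree $n$ gives the required resolution, so $R$ is $FP_{n}$ over $S$.

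The step I expect to be the genuine obstacle is this selection of finitely many generators in the converse, i.e.\ translating between ``finitely generated over $RG_{\chi}$'' and ``cocompact with respect to the $\BR$-grading''. The ring $RG_{\chi}$ need not be Noetherian, and when $\chi$ has dense image even the half-line submodules of $RG$ fail to be finitely generated over it, so one cannot simply truncate $\mathbf{\tilde F}$ at a level $t$ and declare the result a finite free complex; the point is that the uniform room $\lambda$ supplied by $CA^{n-1}$ is exactly what lets one extract finite free data at each stage and keep the induction going. Making all of this precise is the technical heart of the argument and is the content of \cite[Theorem~3.2]{BieriRenz}.
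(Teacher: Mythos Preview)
The paper itself gives no proof of this statement: it is stated as ``the content of \cite[Theorem~3.2]{BieriRenz}'' and marked with a $\square$. So there is nothing to compare your argument against within the paper, and your concluding sentence, deferring the technical core to that same reference, is in that sense exactly what the authors do.

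That said, your sketch of the converse direction has a genuine gap beyond the one you flag. In the inductive step you write that, since $\mathcal{X}_{p+1}$ is finite, one can select finitely many elements of $\tilde F_{p+1,v}^{[-b-\lambda,\infty]}$ whose boundaries generate $Z_{p}$ over $S$. But finiteness of $\mathcal{X}_{p+1}$ says only that $F_{p+1}$ is finitely generated over $RG$; the level set $\tilde F_{p+1,v}^{[-b-\lambda,\infty]}$ is not finitely generated over $S$ when $\chi$ has dense image, and more to the point there is no reason for the kernel $Z_{p}\subseteq C_{p}$ to be finitely generated over $S$, since $S=RG_{\chi}$ is typically non-Noetherian. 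So ``select finitely many generators'' is exactly the step that needs an idea, not a consequence of the finiteness you invoke. The actual Bieri--Renz argument does not build a fresh $S$-resolution $\mathbf{C}$ in this way; it instead produces, from the $CA^{n-1}$ condition, a $G$-equivariant chain endomorphism $\varphi$ of $\mathbf{F}^{(n)}$ lifting $\mathrm{id}_{R}$ with $v(\varphi(c))>v(c)$ for all $c$ (the ``valuation-raising'' criterion), and uses iterates of $\varphi$ to manufacture the required finite data over $S$. Your forward direction is along the right lines, but for the converse you should either carry out that raising-map argument or explain a different mechanism for the finite generation of $Z_{p}$; the reason you give does not suffice.
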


\section{Valuations on tensor products}\label{Val} 

We now consider the product $G\times H$ of two groups.  Let $\mathbf{F}\twoheadrightarrow R$ be an admissible free resolution of the $RG$-module $R$ with basis $\mathcal{X}$, and let 
$\mathbf{F}' \twoheadrightarrow R$ be an admissible free resolution of the $RH$-module $R$ with basis $\mathcal{X'}$.  Then, with respect to the product action, $\mathbf F\otimes_R\mathbf{F}' \twoheadrightarrow R$ is a free resolution, and 
$\{x\otimes x' \mid x\in \mathcal{X}, x' \in \mathcal{X'}\}$ is a basis
for $\mathbf{F}\otimes_R\mathbf F'$.  Since $\partial (x\otimes x') =
\partial x\otimes x' \pm x\otimes \partial x'$, simple considerations show that 
$\mathbf{F} \otimes_R \mathbf{F}'\twoheadrightarrow R$ is admissible.

Let $\chi :G\to \BR$ and $\chi' : H\to \BR$ be characters. Let
$v_\chi$ and $v'_{\chi '}$ be the basic valuations on $\mathbf{F}$ and
$\mathbf{F}'$ extending $\chi$ and $\chi '$ respectively.  Denote by $w$
the basic valuation $v_{(\chi ,\chi ')}: \mathbf{F}\otimes_R\mathbf{F}'
\to \BR \cup \{\infty\}$ extending $(\chi ,\chi ')$.

\begin{prop}\label{4.1} $w(c\otimes c')=v_{\chi}(c)+v'_{\chi '}(c)$
\end{prop}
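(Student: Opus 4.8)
The plan is to reduce the identity to the case of single basis elements and then prove that case by induction on homological degree. Write $\mathcal X=\coprod_i\mathcal X_i$ and $\mathcal X'=\coprod_j\mathcal X'_j$, and recall that $\mathbf F\otimes_R\mathbf F'$ is a free $R$-module on $G\mathcal X\times H\mathcal X'$, via $(gx)\otimes(g'x')=(g,g')(x\otimes x')$. The first observation I would record is that, because $R$ is a domain, no cancellation of basis elements can occur in a tensor product: if $c\in\mathbf F$ and $c'\in\mathbf F'$ are non-zero then $\operatorname{supp}(c\otimes c')=\operatorname{supp}(c)\times\operatorname{supp}(c')$, since a product of two non-zero coefficients is non-zero. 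Granting that $w(b\otimes b')=v_\chi(b)+v'_{\chi'}(b')$ for all $b\in G\mathcal X$ and $b'\in H\mathcal X'$, the fact that $w$ is a support valuation then gives $w(c\otimes c')=\min\{v_\chi(b)+v'_{\chi'}(b')\mid b\in\operatorname{supp}(c),\ b'\in\operatorname{supp}(c')\}=v_\chi(c)+v'_{\chi'}(c')$, which is the assertion (with the evident reading of the right-hand side as $v_\chi(c)+v'_{\chi'}(c')$). Finally, since $w$, $v_\chi$, $v'_{\chi'}$ are all equivariant, e.g. $w((g,g')e)=\chi(g)+\chi'(g')+w(e)$, it suffices to prove $w(x\otimes x')=v_\chi(x)+v'_{\chi'}(x')$ for $x\in\mathcal X$ and $x'\in\mathcal X'$.

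For this last step I would induct on $i+j$, where $x\in\mathcal X_i$ and $x'\in\mathcal X'_j$. If $i+j=0$ both sides vanish by the definitions of the three basic valuations. If $i+j>0$, the basic valuation satisfies $w(x\otimes x')=w(\partial(x\otimes x'))$, and $\partial(x\otimes x')=\partial x\otimes x'\pm x\otimes\partial x'$, where the first summand is absent when $i=0$ and the second when $j=0$. Suppose $i>0$. By admissibility $\partial x\neq 0$, and $\partial x\otimes x'$ is supported on basis elements $b\otimes x'$ with $b\in G\mathcal X_{i-1}$, all of total degree $i-1+j<i+j$; so by the inductive hypothesis and the support principle above, $w(\partial x\otimes x')=v_\chi(\partial x)+v'_{\chi'}(x')=v_\chi(x)+v'_{\chi'}(x')$, using $v_\chi(x)=v_\chi(\partial x)$. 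Symmetrically, if $j>0$ then $w(x\otimes\partial x')=v_\chi(x)+v'_{\chi'}(x')$. When both summands are present the supports $\operatorname{supp}(\partial x\otimes x')\subseteq G\mathcal X_{i-1}\times H\mathcal X'_j$ and $\operatorname{supp}(x\otimes\partial x')\subseteq G\mathcal X_i\times H\mathcal X'_{j-1}$ are disjoint, because $G\mathcal X_{i-1}$ and $G\mathcal X_i$ lie in the distinct summands $F_{i-1}$ and $F_i$ of $\mathbf F$; hence no cancellation occurs in the sum and $w(\partial(x\otimes x'))$ is again $v_\chi(x)+v'_{\chi'}(x')$. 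If exactly one summand is present the same value results directly. This closes the induction, and with it the proposition.

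The step I expect to require the most care is the no-cancellation argument in the inductive step: one must notice that $\partial x\otimes x'$ and $x\otimes\partial x'$ involve $\mathbf F$-basis elements of \emph{different} homological degrees (equivalently, a basis element of $\mathcal X_i$ is never a $G$-translate of one of $\mathcal X_{i-1}$), so that their supports are genuinely disjoint and the minimum is not destroyed upon forming the difference. The only other place the hypotheses are essential is the identity $\operatorname{supp}(c\otimes c')=\operatorname{supp}(c)\times\operatorname{supp}(c')$, where ``$R$ is a domain'' is precisely what is needed; over a ring with zero divisors this can fail, and then so can the proposition.
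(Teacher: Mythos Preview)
Your proof is correct and follows essentially the same route as the paper's: induction on total degree to establish $w(x\otimes x')=v_\chi(x)+v'_{\chi'}(x')$ for basis elements (using disjointness of the supports of $\partial x\otimes x'$ and $x\otimes\partial x'$), followed by the observation that $R$ being a domain gives $\operatorname{supp}(c\otimes c')=\operatorname{supp}(c)\times\operatorname{supp}(c')$ for the general case. You are somewhat more explicit than the paper about the edge cases $i=0$ or $j=0$ and about why the supports in the inductive step are disjoint, but the argument is the same.
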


\begin{proof} We abbreviate $v_{\chi}$ to $v$ and $v'_{\chi '}$ to $v
'$.  The proof is by induction on the degree $n$ of $w(c\otimes c')$.
The case  $n=0$ is easy, so we assume the Proposition is true for
$n-1$. Let $x\otimes x'$ have degree $n\geq 1$.

\begin{equation*}\begin{aligned}
 w(x \otimes x') &=  w(\partial (x \otimes x'))\\
 &= w(\partial x \otimes x'+x \otimes \partial x')\\
 &=\min \{ w(\partial x \otimes x'), w(x \otimes \partial x')\}\\
 &=\min \{v(\partial x) + v'(x'), v(x)+v'(\partial x')\} \text {\rm by induction, since the supports are disjoint}\\
 &= v(x) + v'(x')\\
\end{aligned}\end{equation*}

So $$w(gx\otimes hx')=v(gx)+v'(hx')$$

Since $R$ is a domain, supp$(c\otimes c')=$ supp$(c)\times $supp$(c')$.
Thus $$w(c\otimes c')=v_{\chi}(c)+v_{\chi '}(c)$$.
\end{proof}

\section{Sigma Invariants of Products}\label{SigmaProd} 

It is convenient to split the proof of Theorem \ref{Thm1} into two propositions.

\begin{prop}\label{extreme} $[(\chi ,0)]\in \Sigma ^{n}(G\times H;R)$
if and only if $[\chi ]\in \Sigma ^{n}(G;R)$ 
\end{prop}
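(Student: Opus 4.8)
The plan is to use the characterization of $\Sigma^n$ via controlled acyclicity (Theorem \ref{3.3}) together with the tensor-product valuation formula (Proposition \ref{4.1}). First I would pick an admissible free resolution $\mathbf{F}\twoheadrightarrow R$ of the $RG$-module $R$ with finitely generated $n$-skeleton, and an admissible free resolution $\mathbf{F}'\twoheadrightarrow R$ of the $RH$-module $R$ with finitely generated $n$-skeleton (the latter exists since, by our standing convention, $H$ has type $FP_n(R)$ whenever we discuss $\Sigma^n(G\times H;R)$). Then $\mathbf{F}\otimes_R\mathbf{F}'\twoheadrightarrow R$ is an admissible resolution of the $R[G\times H]$-module $R$ with finitely generated $n$-skeleton, and by Proposition \ref{4.1} the basic valuation $w=v_{(\chi,0)}$ satisfies $w(c\otimes c')=v_\chi(c)+v'_0(c')$. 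Now the key point is that $v'_0$ takes every non-zero element of $\mathbf{F}'$ to $0$ (this is exactly the remark at the end of Section \ref{Basic}, applied with the zero character). Hence $w(c\otimes c')=v_\chi(c)$ for non-zero $c'$, and more generally the filtration level of a chain in $\mathbf{F}\otimes_R\mathbf{F}'$ is controlled entirely by the $\mathbf{F}$-coordinate.

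The main step is then to compare the filtered complex $(\widetilde{\mathbf{F}}\otimes_R\mathbf{F}')_w^{[t,\infty]}$ with $\widetilde{\mathbf{F}}_{v_\chi}^{[t,\infty]}\otimes_R\mathbf{F}'$. Because $w(c\otimes c')=v_\chi(c)$ on basis elements and supports multiply (supp$(c\otimes c')=$ supp$(c)\times$ supp$(c')$, as in the proof of Proposition \ref{4.1}, since $R$ is a domain), one checks that for each $t$ the sublevel set ${\tilde F}_{w}^{[t,\infty]}\otimes$-part is precisely ${\tilde F}_{v_\chi}^{[t,\infty]}\otimes_R\mathbf{F}'$ — a chain $\sum c_i\otimes c_i'$ (written in reduced form) has $w$-value $\geq t$ iff each $c_i$ has $v_\chi$-value $\geq t$. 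Since $\mathbf{F}'\twoheadrightarrow R$ is a free, hence flat, resolution of $R$, tensoring the inclusion $\widetilde{\mathbf{F}}_{v_\chi}^{[t,\infty]}\hookrightarrow \widetilde{\mathbf{F}}_{v_\chi}^{[t-\lambda,\infty]}$ with $\mathbf{F}'$ and using the Künneth theorem (or just flatness together with the quasi-isomorphism $\mathbf{F}'\simeq R$) identifies the induced maps on homology: $H_p\big(\widetilde{\mathbf{F}}_{v_\chi}^{[t,\infty]}\otimes_R\mathbf{F}'\big)$ computes the same thing as $H_p\big(\widetilde{\mathbf{F}}_{v_\chi}^{[t,\infty]}\big)$ up to the appropriate bookkeeping. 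Thus the inclusion-induced map $H_p\big((\widetilde{\mathbf{F}}\otimes\mathbf{F}')_w^{[t,\infty]}\big)\to H_p\big((\widetilde{\mathbf{F}}\otimes\mathbf{F}')_w^{[t-\lambda,\infty]}\big)$ is zero for all $t$ and all $p\leq n-1$ if and only if the corresponding map for $\widetilde{\mathbf{F}}$ with respect to $v_\chi$ is zero for all $t$ and $p\leq n-1$; that is, $\widetilde{\mathbf{F}}\otimes_R\mathbf{F}'$ is $CA^{n-1}$ with respect to $w$ iff $\widetilde{\mathbf{F}}$ is $CA^{n-1}$ with respect to $v_\chi$. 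Applying Theorem \ref{3.3} twice gives $[(\chi,0)]\in\Sigma^n(G\times H;R)$ iff $[\chi]\in\Sigma^n(G;R)$.

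I expect the main obstacle to be the careful identification of the filtration: one must be sure that $\mathbf{F}'$ (which is not finitely generated in general, and carries only the trivial valuation $v'_0$) does not disturb the controlled-acyclicity estimate — in particular that the same constant $\lambda$ works, and that no issues arise from the complex $\mathbf{F}'$ being infinite in each degree. The point is that $v'_0$ contributes nothing to any valuation, so the filtration of the tensor product is literally the filtration of $\widetilde{\mathbf{F}}$ tensored with the fixed complex $\mathbf{F}'$, and flatness does the rest; Remark \ref{3.1} takes care of the uniformity of $\lambda$ in $t$. A secondary point worth a line is the boundary case $n=0$, where the $CA^{-1}$ condition is vacuous and both sides reduce to the true statement $[(\chi,0)]\in\Sigma^0(G\times H;R)=S(G\times H)$ and $[\chi]\in\Sigma^0(G;R)=S(G)$.
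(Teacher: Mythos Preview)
Your argument is correct and takes a genuinely different route from the paper's. The paper disposes of the ``if'' direction by quoting Meinert's Inequality and proves ``only if'' by an explicit chain-level retraction: choosing a single generator $x'$ of $F'_0$, the maps $i(c)=c\otimes x'$ and $p(c\otimes c')=\epsilon'(c')c$ (zero in higher $\mathbf{F}'$-degree) give $p\circ i=\mathrm{id}$, and then the fact that $v\circ p$ is dominated by the basic valuation $w$ transports the $CA^{n-1}$ estimate from $\mathbf{F}\otimes_R\mathbf{F}'$ back to $\mathbf{F}$. Your approach instead exploits the exact identification of filtrations $(\mathbf{F}\otimes_R\mathbf{F}')_w^{[t,\infty]}=\mathbf{F}_{v_\chi}^{[t,\infty]}\otimes_R\mathbf{F}'$ (which is genuinely an equality of subcomplexes, since $v'_0\equiv 0$), and then uses that $\mathbf{F}'\to R$ is a flat resolution to identify the homology of the filtration pieces, naturally in $t$. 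This gives both directions simultaneously and with the \emph{same} $\lambda$, avoiding any appeal to Meinert or to domination of valuations.

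One point worth tightening: the complexes $(\mathbf{F}\otimes_R\mathbf{F}')^{\sim}$ and $\widetilde{\mathbf{F}}\otimes_R\mathbf{F}'$ are not the same (the tilde does not commute with the tensor product), so your displayed identification ``$(\widetilde{\mathbf{F}}\otimes_R\mathbf{F}')_w^{[t,\infty]}=\widetilde{\mathbf{F}}_{v_\chi}^{[t,\infty]}\otimes_R\mathbf{F}'$'' is not literally true. What \emph{is} true is the unaugmented identity $(\mathbf{F}\otimes_R\mathbf{F}')_w^{[t,\infty]}=\mathbf{F}_{v_\chi}^{[t,\infty]}\otimes_R\mathbf{F}'$; the quasi-isomorphism $\mathrm{id}\otimes\epsilon'$ then identifies its homology with $H_*(\mathbf{F}_{v_\chi}^{[t,\infty]})$, compatibly with the two augmentations to $R$, and hence the \emph{reduced} homologies (the ones relevant for $CA^{n-1}$) agree naturally in $t$. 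Your phrase ``up to the appropriate bookkeeping'' covers exactly this, but it would be worth making the distinction explicit.
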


\begin{proof} The ``if" part of the Proposition follows from Meinert's Inequality. We are to prove the ``only if" part.  

With notation as in Section \ref{Val} we assume that
$\mathbf{F}$ and $\mathbf{F}'$ have  finitely generated $n$-skeleta.
Then $\mathbf{F} \otimes_R \mathbf{F'}$ also has finitely generated
$n$-skeleton.  Let $v$ denote the basic valuation on $\mathbf{F}$
extending the character $\chi$ on $G$. By Proposition \ref{4.1}, $w$,
defined by $w(c\otimes c ')=v(c)$, is the basic valuation extending the
character $(\chi ,0)$ on $G\times H$.

The chain complex $\mathbf{F}$ is a retract of $\mathbf{F} \otimes_R
\mathbf{F'}$ as follows.  We may assume (for convenience) that $F '_0$ is
generated by a single generator $x'$ and that $x'$ is mapped by $\epsilon
'$ to $1\in R$. Define $i:\mathbf{F}\to \mathbf{F} \otimes_R \mathbf{F'}$
by $i(c)=c\otimes x'$. Define $p:\mathbf{F} \otimes_R \mathbf{F'}\to
\mathbf{F}$ by $p(c\otimes c')=\epsilon (c')c$ when $c'$ has degree 0, and
$p(c\otimes c')=0$ when $c'$ has degree $>0$.  One checks that $i$ and $p$
are chain maps and that $p\circ i$ is the identity map. The composition
$v\circ p$ is a valuation on $\mathbf{F} \otimes_R \mathbf{F'}$ and is
therefore dominated by the basic valuation $w$.

Let $z\in {\mathbf F}$ be a $k$-cycle where $k<n$. By assumption there is a
number $\lambda \geq 0$ and a $(k+1)$-chain $d\in \mathbf{F} \otimes_R
\mathbf{F'}$ with $\partial d=i(z)$ and $w(d)\geq w(i(z))-\lambda.$
Hence we have $$\partial p(d)=p(\partial d)=p(i(z))=z$$ and

\begin{equation*}\begin{aligned}
 v(p(d))&= (v\circ p)(d)\\ &\geq w(d)-\mu\\ &\geq w(i(z))-\lambda -\mu\\
 &=v(z)-\lambda -\mu.\\ \end{aligned}\end{equation*}

\end{proof}

In view of Meinert's Inequality, the new content of Theorem \ref{Thm1} is:

\begin{prop}\label{5.1}  Let $R$ be a field.  Then
$$ \Sigma^n(G\times H; R)^c \supseteq \bigcup^n_{p=0}
\Sigma^p(G;R)^c * \Sigma^{n-p} (H;R)^c$$
\end{prop}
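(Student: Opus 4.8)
The plan is to derive Proposition~\ref{5.1} from Proposition~\ref{extreme} together with a \emph{controlled K\"unneth} argument, translating everything into controlled acyclicity via Theorem~\ref{3.3}. First I would dispose of the ``boundary'' of the join. Since $\Sigma^0(\,\cdot\,;R)$ is the whole sphere, the terms $p=0$ and $p=n$ of the union contribute exactly $\Sigma^n(H;R)^c$ and $\Sigma^n(G;R)^c$, and these lie in $\Sigma^n(G\times H;R)^c$ by Proposition~\ref{extreme} (and its mirror image with $G$ and $H$ interchanged); the endpoints of the remaining joins $\Sigma^p(G;R)^c*\Sigma^{n-p}(H;R)^c$ with $1\le p\le n-1$ are handled the same way, using $\Sigma^0\supseteq\Sigma^1\supseteq\cdots$. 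So it remains to fix non-zero characters $\chi$ on $G$ and $\chi'$ on $H$ with $[\chi]\in\Sigma^p(G;R)^c$ and $[\chi']\in\Sigma^{n-p}(H;R)^c$ for some $1\le p\le n-1$, and to show $[(\chi,\chi')]\in\Sigma^n(G\times H;R)^c$. Let $m$ be the largest index with $[\chi]\in\Sigma^m(G;R)$ and $m'$ the largest with $[\chi']\in\Sigma^{m'}(H;R)$; then $m\le p-1$, $m'\le n-p-1$, so $m+m'\le n-2$.

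Next I would set up the resolutions as in Section~\ref{Val}: admissible free resolutions $\mathbf F\twoheadrightarrow R$ over $RG$ and $\mathbf F'\twoheadrightarrow R$ over $RH$ with finitely generated $n$-skeleta (arranging, as in Proposition~\ref{extreme}, that $F'_0$ is free on one generator), and the admissible resolution $\mathbf F\otimes_R\mathbf F'\twoheadrightarrow R$ with basic valuation $w=v_{(\chi,\chi')}$, which by Proposition~\ref{4.1} satisfies $w(c\otimes c')=v_\chi(c)+v'_{\chi'}(c')$. By Theorem~\ref{3.3}, the assertion $[(\chi,\chi')]\in\Sigma^n(G\times H;R)^c$ amounts to the failure, in some degree $\le n-1$, of the controlled-acyclicity zero-map condition for $\widetilde{\mathbf F\otimes_R\mathbf F'}$ with respect to $w$; since $m+m'\le n-2$ it is enough to produce such a failure in degree $m+m'$. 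Dually, the maximality of $m$ and $m'$ together with Theorem~\ref{3.3} gives that $\mathbf{\tilde F}$ is $CA^{m-1}$ but not $CA^{m}$ with respect to $v_\chi$, and $\mathbf{\tilde F}'$ is $CA^{m'-1}$ but not $CA^{m'}$ with respect to $v'_{\chi'}$; it is exactly this ``$CA$ below degree $m$, but failure in degree $m$'' dichotomy for each factor that drives the argument.

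For the construction, given $\mu\ge0$ I would fix the domination constant $\lambda_0$ witnessing the $CA^{m-1}$- and $CA^{m'-1}$-properties and then, using the failures of $CA^{m}$ and $CA^{m'}$ and Remark~\ref{3.1}, choose a reduced $m$-cycle $z$ in $\mathbf F$ and a reduced $m'$-cycle $z'$ in $\mathbf F'$ whose classes survive in $H_m$ of the $v_\chi$-filtration piece of $\mathbf{\tilde F}$ at level $v_\chi(z)-\nu$, respectively in $H_{m'}$ of the $v'_{\chi'}$-filtration piece of $\mathbf{\tilde F}'$ at level $v'_{\chi'}(z')-\nu$, for a constant $\nu$ taken large enough (depending only on $\mu$, $\lambda_0$, and the finitely many basis elements of the $n$-skeleta) to absorb every loss incurred below. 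By the Leibniz rule $\partial(c\otimes c')=\partial c\otimes c'\pm c\otimes\partial c'$ (read with $\epsilon(z)=0$, $\epsilon'(z')=0$ in the boundary cases $m=0$ or $m'=0$), the element $z\otimes z'$ is a reduced $(m+m')$-cycle of $\widetilde{\mathbf F\otimes_R\mathbf F'}$ with $w$-value $v_\chi(z)+v'_{\chi'}(z')$. It then remains to show that $[z\otimes z']$ does not become a boundary after passage to filtration level $v_\chi(z)+v'_{\chi'}(z')-\mu$.

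This is the crux, and the only place where $R$ being a field is used. Suppose $z\otimes z'=\partial e$ with $w(e)\ge v_\chi(z)+v'_{\chi'}(z')-\mu$, and write $e=\sum_{i+j=m+m'+1}e_{ij}$ with $e_{ij}\in F_i\otimes_R F'_j$, each obeying the same $w$-bound; this produces bicomplex equations along the anti-diagonal $i+j=m+m'$ relating the $\partial'$- and $\partial''$-images of the $e_{ij}$ to $z\otimes z'$. Filtering $(\mathbf F\otimes_R\mathbf F')_w^{[t,\infty]}$ by $\mathbf F'$-degree, each column is a direct sum of $v_\chi$-filtration pieces of $\mathbf F$ (shifted by the $v'_{\chi'}$-values of the relevant basis elements of $\mathbf F'$), so the $CA^{m-1}$-property of $\mathbf{\tilde F}$ allows one to kill, with bounded loss, all contributions in $\mathbf F$-degree $<m$, and symmetrically the $CA^{m'-1}$-property of $\mathbf{\tilde F}'$ kills those in $\mathbf F'$-degree $<m'$. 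Over a field there are no $\mathrm{Tor}$ terms, so what survives on the relevant diagonal is, up to controlled shift, a genuine tensor product of vector spaces $\tilde H_m(\text{$v_\chi$-piece})\otimes_R\tilde H_{m'}(\text{$v'_{\chi'}$-piece})$, in which a controlled filling of $z\otimes z'$ would force $[z]\otimes[z']=0$; but $[z],[z']\ne0$ over the field $R$, hence $[z]\otimes[z']\ne0$ --- a contradiction, once $\nu$ was chosen large enough. Thus $\widetilde{\mathbf F\otimes_R\mathbf F'}$ fails controlled acyclicity in degree $m+m'$, a fortiori is not $CA^{n-1}$ with respect to $w$, so $[(\chi,\chi')]\in\Sigma^n(G\times H;R)^c$. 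I expect the main obstacle to be making this controlled K\"unneth step precise: the $w$-filtration on $\mathbf F\otimes_R\mathbf F'$ is not a single tensor of filtration pieces of the factors but the ``convolution'' $\sum_s\mathbf F_{v_\chi}^{[s,\infty]}\otimes_R(\text{the level-}(t-s)\text{ piece of }\mathbf F')$, so the two control budgets must be tracked independently and one needs the finiteness of the skeleta to bound the number of diagonal steps, hence the total accumulated loss. That this fails over $\BZ$, where $\mathrm{Tor}$ survives, is the source of Sch\"utz's counterexample.
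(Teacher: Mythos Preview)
Your reduction (Proposition~\ref{extreme} for the endpoints, then passing to the maximal $m,m'$ with $[\chi]\in\Sigma^{m}(G;R)$, $[\chi']\in\Sigma^{m'}(H;R)$) is exactly what the paper does; their ``without loss of generality we may assume $CA^{p-2}$ and $CA^{n-p-2}$'' is the same move, tacitly replacing $n$ by $m+m'+2$. The role of the field hypothesis---that $[z]\otimes[z']\ne 0$ whenever $[z],[z']\ne 0$---is also used at the same spot.

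The genuine divergence is in the central step. You propose to show directly that the $(m+m')$-cycle $z\otimes z'$ does not bound in $(\widetilde{\mathbf F\otimes_R\mathbf F'})_w^{[t-\mu,\infty]}$, via a bicomplex/spectral-sequence ``controlled K\"unneth''. You correctly identify the obstacle: the $w$-filtration piece is a \emph{convolution}, not a product, so a putative filler $e$ only satisfies $v_\chi+v'_{\chi'}\ge t-\mu$ on its support, with no separate control of the two summands; your sketch of filtering by $\mathbf F'$-degree and using $CA^{m-1}$, $CA^{m'-1}$ does not yet explain how to prevent the two valuations from trading off against each other. The paper bypasses this entirely by working one degree higher: it picks $c,c'$ with $\partial c=z$, $\partial c'=z'$ and studies the $(n-1)$-cycle $\partial(c\otimes c')=z\otimes c'\pm c\otimes z'$. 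For an arbitrary filler $d$, they introduce fixed ``splitters'' $u=v(z)-\mu$, $u'=v'(z')-\mu'$ and decompose chains into left/right and bottom/top pieces. A short chain computation shows $z\otimes z'$ is homologous, inside the honest product $\mathbf C:=\mathbf F_v^{[u,\infty]}\otimes_R\mathbf F'^{\,[u',\infty]}_{v'}$, to a cycle $e=\partial(b_\beta)$ built from the corner of $d$. Now K\"unneth applies cleanly to $\mathbf C$, forcing $e\ne 0$, hence $(d_\lambda)_\beta\ne 0$; i.e.\ $d$ has support with both $v_\chi<u$ and $v'_{\chi'}<u'$, giving $w(d)<u+u'=w(z\otimes z')-\mu-\mu'$, and $\mu,\mu'$ can be taken arbitrarily large.

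So your plan is sound in outline, but the step you flag as ``the main obstacle'' is precisely where the paper does something different and more elementary: instead of controlling a convolution via a spectral sequence, it manufactures a genuine product complex $\mathbf C$ by the splitter trick and then applies ordinary K\"unneth there. If you want to push your route through, you will effectively need a device equivalent to the splitters to decouple the two control budgets; without it the argument as written has a gap at the crux.
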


\begin{proof} The cases $p=0$ and $p=n$ {\it(mutatis mutandis)} are covered
by Proposition \ref{extreme}, so we will assume $1\leq p\leq n-1$.
As before, $v$ denotes a basic valuation on $\mathbf{F}$ extending
the character $\chi$ on $G$, and $v'$ denotes a basic valuation on
$\mathbf{F'}$ extending the character $\chi '$ on $H$.  We are to show
that if  $[\chi ]\in \Sigma^{p}(G;R)^{c}$ and  $[\chi ']\in \Sigma^{n-p}
(H;R)^{c}$ then $[(\chi, \chi ')]\in \Sigma^n(G\times H; R)^c $.

We view $G\mathcal {X}\times H\mathcal {X'}$ as the $R$-basis of
$\mathbf{F} \otimes_R \mathbf{F'}$.  We denote by $p$ and $p'$ the
projections onto the two factors  $G\mathcal {X}$ and $H\mathcal {X'}$.
Given $u\in {\mathbb R}$, each chain $y\in \mathbf{F} \otimes_R
\mathbf{F'}$ has a unique decomposition $y=y_{\lambda }+y_{\rho
}$ where $$v(p(\text {\rm supp}(y_{\lambda})))<u\leq v(p(\text
{\rm supp}(y_{\rho}))).$$ Thus, $y_\rho$ is the  ``subchain''
obtained from $y$ by setting equal to zero the coefficients of
all basis elements $gx \otimes hx'$ such that $gx$ does not lie in
$\mathbf{F}_v^{[u,\infty]}$.  (We think of $\lambda$ and $\rho$ as
standing for ``left" and ``right" -- see Figure 2.) We observe: \begin{enumerate} \item
For all $y\in {\mathbf F} \otimes_R \mathbf{F'}$, $y_{\lambda}=0$ if and
only if $v(p(\text {\rm supp}(y)))\geq u$.  \item Thus, in particular,
$(c\otimes c')_{\lambda }$=0$ \text{ \rm if and only if } v(c)=0$.
\item If $d$ and $e$ in ${\mathbf F} \otimes_R \mathbf{F'}$ have
disjoint supports then $(d+e)_{\lambda }= d_{\lambda }+e_{\lambda }$.
\end{enumerate} Similarly, given $u'\in {\mathbb R}$, each chain $y$ has
a unique decomposition $y=y_{\beta }+y_{\tau }$ where $$v'(p'(\text {\rm
supp}(y_{\beta})))<u'\leq v'(p'(\text {\rm supp}(y_{\beta}))).$$ Thus,
$y_\tau$ is the  ``subchain'' obtained from $y$ by setting equal to zero
the coefficients of all basis elements $gx \otimes hx'$ such that $hx'$
does not lie in $\mathbf{F}_v'^{[u',\infty]}$.  (We think of $\beta$
and $\tau$ as standing for ``bottom" and ``top".) We call the (fixed)
numbers $u$ and $u'$ {\it splitters}; they must be specified before this
notation can be used.

Without loss of generality we may assume $\mathbf{\tilde F}$ is $CA^{p-2}$ with respect to $v$ and  $\mathbf{\tilde {F}'}$ is $CA^{n-p-2}$ with respect to $v'$. For a chosen $(p-1)$-cycle 
$z\in \mathbf{F}$ and a chosen $(n-p-1)$-cycle $z'\in \mathbf{F'}$ define 

$$\eta (z):=\text{ inf}\{v(z)-v(c)|\partial c=z\}$$ 

and

$$\eta '(z'):=\text{ inf}\{v'(z')-v'(c')|\partial c'=z'\}$$ 

The numbers $\eta (z)$ and $\eta '(z')$ can be made arbitrarily
large by suitable choice of $z$ and $z' $. Once $z$ and $z' $ have
been chosen, we choose positive numbers $\mu <\eta (z)$ and $\mu ' <\eta '(z')$. 
Then choose chains $c$ and $c'$ such that $\partial c=z$,
$\partial c'=z'$,  $v(z)-\mu -1<v(c)\leq v(z)-\mu$ and 
$v'(z')-\mu ' -1<v'(c')\leq v'(z')-\mu '$. See Figure 1. 

\begin{figure}[h]
        \begin{center}
        \scalebox{0.5}{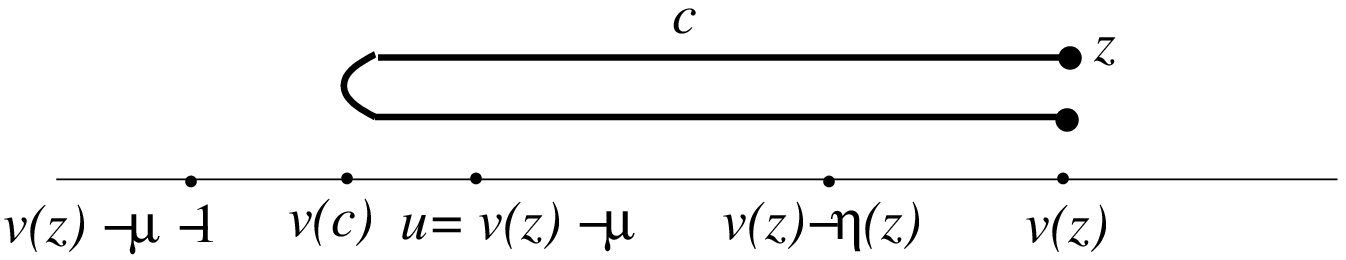}
        \caption{}
        \label{fig:Fig1}
        \end{center}
\end{figure}

Since $c\otimes c'$ is an $n$-chain, and $\partial (c\otimes c')=z\otimes c'\pm c\otimes z'$ we have:

\begin{equation*}
\begin{aligned}
w(\partial (c\otimes c'))&=w(z\otimes c'\pm c\otimes z')\\ 
&\geq \text{min}\{w(z\otimes c'), w(c\otimes z')\}\\
&=\text{min}\{v(z)+v'(c'), v(c)+v'(z')\}\\
&>\text{min}\{v(z)+v'(z')-\mu '-1, v(z)+v'(z')-\mu -1\}\\
&=w(z\otimes z')-1-\text{ max}\{\mu,\mu '\}
\end{aligned}
\end{equation*}

Now we take as our splitters $u:=v(z)-\mu$ and $u ':=v'(z')-\mu
'$.  We consider an arbitrary $n$-chain $d$ such that $\partial
d=\partial(c\otimes c')$.  Define 
$$b:=\partial (d_{\lambda})-(\partial d)_{\lambda}$$ 
and define $e:=\partial (b_{\beta})$.  Thus $e$ is
an $(n-2)$-cycle. See Figure 2. 
\begin{figure}[h]
        \begin{center}
        \scalebox{0.5}{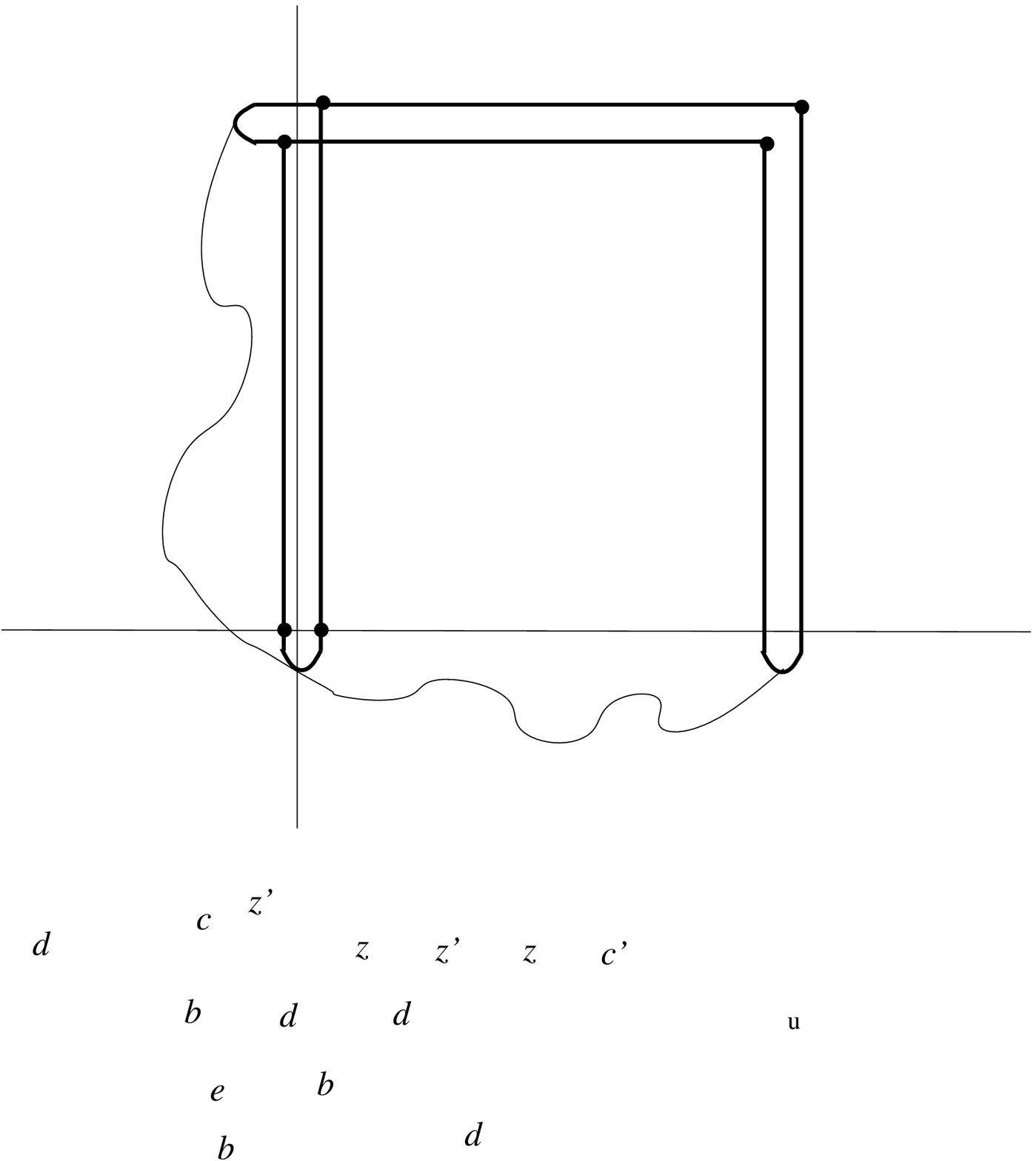}
        \caption{}
        \label{fig:Fig2}
        \end{center}
\end{figure}

{\it Claim 1:} $(\partial d)_{\lambda}=\pm (c\otimes z')_{\lambda}$.

{\it Proof:} $\partial d=\partial (c\otimes c')=z\otimes c'\pm c\otimes z'$. 
Since $v(p(\text {\rm supp}(z\otimes c')))\geq v(\text {\rm supp}(z))>u$ we have 
$(z\otimes c')_{\lambda}=0$.  Thus 

\begin{equation*}
\begin{aligned}
(\partial d)_{\lambda}&=(z\otimes c')_{\lambda }\pm (c\otimes z')_{\lambda }\\
&=\pm (c\otimes z')_{\lambda }\\
\end{aligned}
\end{equation*}
as claimed.

Claim 1 is used in:

\begin{equation*}\begin{aligned}
z\otimes z'&=\partial (c\otimes z')\\
&=\partial ((c\otimes z')_{\lambda})+\partial ((c\otimes z')_{\rho})\\
&=\pm \partial ((\partial d)_{\lambda})+\partial ((c\otimes z')_{\rho})\\
&=\pm \partial (b)+\partial ((c\otimes z')_{\rho})\\
&=\pm \partial (b_{\beta})\pm \partial (b_{\tau})+\partial ((c\otimes z')_{\rho})\\
&=\pm e\pm \partial (b_{\tau})+\partial ((c\otimes z')_{\rho})
\end{aligned}\end{equation*}

Next, we show that the indicated homology between $z\otimes z'$ and $e$ takes place in the chain complex

$$\mathbf{C}:=\mathbf{F}_v^{[u,\infty]}\otimes\mathbf{F'}_{v '}^{[u',\infty]}$$

{\it Claim 2:} $b=(\partial (d_{\lambda}))_{\rho }$.

{\it Proof:} $d=d_{\lambda}+d_{\rho}$. So $\partial d=\partial (d_{\lambda})+\partial (d_{\rho})$.  Thus $(\partial d)_{\lambda}=(\partial (d_{\lambda}))_{\lambda}$, because $\mathbf{F}_v^{[u,\infty]}$ is a chain complex, so $(\partial (d_{\rho}))_{\lambda }=0.$
Since $b=(\partial (d_{\lambda}))_{\rho} + (\partial (d_{\lambda}))_{\lambda} -(\partial d)_{\lambda}$ the Claim follows.

{\it Claim 3:} $c\otimes z'=(c\otimes z')_{\tau}$.

{\it Proof:} This is because $v'p'(\text {\rm supp}(c\otimes z'))>u'$.

It follows from Claims 2 and 3 that $b_{\tau}=(b_{\rho})_{\tau}$ and $(c\otimes z')_{\rho}=((c\otimes z')_{\tau})_{\rho}$, so $e$ and $z\otimes z'$ are homologous in $\mathbf C$.

We now use the fact that $z$ does not bound in $\mathbf{\tilde
F}_{v}^{[u,\infty]}$ and $z'$ does not bound in $\mathbf{\tilde
{F'}}_{v'}^{[u',\infty]}$.  Because $R$ is a field, the K\"{u}nneth
Formula (\cite[Lemma 5.3.1]{Spanier}) applied to $\mathbf C$ implies
that the homology class of $z\otimes z'$ in $\mathbf{C}$ could only
be zero if the homology class of either $z$ or $z'$ is zero in the
appropriate tensor factor of $\mathbf{C}$; and neither is zero, as
we have just seen.  Thus the homology class of  $e$ is non-zero, and
hence the cycle $e=\partial (b_{\beta })$ is non-zero.  It follows that
$b_{\beta }$ is non-zero. Then Claim 2 implies that $(\partial
(d_{\lambda})_{\rho })_{\beta}$ is non-zero.

{\it Claim 4:} $(d_{\lambda})_{\beta }\neq 0$.

{\it Proof:} Since the support of  $(\partial (d_{\lambda})_{\rho
})_{\beta}$ is non-zero, the support of $(\partial (d_{\lambda})_{\rho })$
contains some $gx\otimes hx'$ with $v'(hx')<u'$.  So the same is true of
$\partial (d_{\lambda})$.  Hence the support of $d_{\lambda}$ contains
some $\tilde {g}\tilde {x}\otimes \tilde {h}\tilde {x'}$ with $v'(\tilde
{h}\tilde {x'})<u'$. Thus $(d_{\lambda})_{\beta }\neq 0$ as claimed.

By Claim 4, we get: 

\begin{equation*}\begin{aligned} 
w(d)&\leq w(\tilde {g}\tilde {x}\otimes \tilde {h}\tilde {x'})\\
&=v(\tilde {g}\tilde {x})+v '(\tilde {h}\tilde {x'})\\
&\leq v(z)-\mu + v '(z ')-\mu '\\
&=w(z\otimes z ')-\mu -\mu'
\end{aligned}\end{equation*}

Summarizing: since $\partial d=\partial (c\otimes c')$ we conclude that 

$$w(\partial d)-w(d)\geq w(z\otimes z')-1-\text{ max}\{\mu ,\mu '\}
-w(z\otimes z ')+\mu +\mu '$$

i.e.

$$w(\partial d)-w(d) \geq \text { min}\{\mu ,\mu '\}$$

Since $\mu$ and $\mu '$ can be chosen arbitrarily large, say $>N$, the cycle $\partial (c\otimes c')$ has the property that for
any $d$ with $\partial d=\partial (c\otimes c')$, $w(\partial d)-w(d)>N-1$. In other words, $$\widetilde{\mathbf{F} \otimes_R \mathbf{F'}} \text{ is not } CA^{n-1} \text{ with
respect to } w$$
\end{proof}

\begin{remark}\label{rational} \rm{The only place where we needed $R$ to
be a field was to ensure that (referring to homology classes) $\{z\otimes
z'\}=0$ forces $\{z\}=0$ or $\{z'\}$=0.  This also holds when $R=\BZ$
provided $\{z\}$ and $\{z'\}$ have infinite order.  Thus our proof also
gives Theorem \ref{Thm2} because under the hypotheses of Theorem \ref{Thm2}
the $\BZ$-cycles $z$ and $z'$ can always be chosen so that their homology
classes have infinite order. (This is just a variant on the proof given in Section \ref{Intro}.) We also get Theorem \ref{Thm3} because in
the cases where both sides of the join are non-empty the dimensions of the
relevant cycles are 0 and 1, and the 0-cycle can always be chosen to
be indivisible.}
\end{remark}

\bibliographystyle{amsplain}

\begin{thebibliography}{99}

\bibitem{BB} Bestvina, Mladen ;  Brady, Noel . Morse theory and finiteness properties of groups. Invent. Math.  129  (1997),  no. 3, 445

\bibitem{Bieri} Bieri, Robert . Finiteness length and connectivity length for groups.
 Geometric group theory down under (Canberra, 1996), 9--22, de Gruyter, Berlin,  1999.

\bibitem{RossBieri} Bieri, Robert ;  Geoghegan, Ross . Connectivity properties of group actions on non-positively curved spaces. Mem. Amer. Math. Soc.  161  (2003),  no. 765, xiv+83 pp.

\bibitem{RossBieri3} Bieri, Robert; Geoghegan, Ross . (paper in prepartion)

\bibitem{BGK} Bieri, Robert;  Geoghegan, Ross;  Kochloukova, Dessislava . The Sigma invariants of Thompson's Group $F$, preprint.

\bibitem{BNS} Bieri, Robert ;  Neumann, Walter D. ;  Strebel, Ralph . A geometric invariant of discrete groups.  Invent. Math.  90  (1987),  no. 3, 451--477.

\bibitem{BieriRenz} Bieri, Robert ;  Renz, Burkhardt . Valuations on free resolutions and higher geometric invariants of groups. Comment. Math. Helv.  63  (1988),  no. 3, 464.497.

\bibitem{BuxG} Bux, Kai-Uwe ;  Gonzalez, Carlos . The Bestvina-Brady construction revisited: geometric computation of  $\Sigma$-invariants for right-angled Artin groups.
 J. London Math. Soc. (2)  60  (1999),  no. 3, 793--801.

\bibitem{Gehrke} Gehrke, Ralf . A lower bound for the $\Sigma ^2$ complement of groups with sufficient commutativity, Thesis, University of Frankfurt,

\bibitem{Gehrke2} Gehrke, Ralf . The higher geometric invariants for groups with sufficient
 commutativity. Comm. Algebra  26  (1998),  no. 4, 1097 

\bibitem{Rossbook} Geoghegan, Ross . Topological methods in group theory.
Graduate Texts in Mathematics, 243. Springer, New York,  2008. xiv+473 pp. ISBN: 978-0-387-74611-1

\bibitem{MMVW} Meier, John ;  Meinert, Holger ;  VanWyk, Leonard . On the
$\Sigma$-invariants of Artin groups. Geometric topology and geometric
group theory (Milwaukee, WI,  1997).  Topology Appl.  110  (2001),
no. 1, 71--81.

\bibitem{MMVW2} Meier, John ;  Meinert, Holger ;  VanWyk, Leonard . On the
$\Sigma$-invariants of graph products based on trees.  Preprint.

\bibitem{Meinert} Meinert, Holger . Diplome Thesis, University of Frankfurt, 1990

\bibitem{Dirk} Sch\"{u}tz, Dirk . On the direct product conjecture for sigma invariants, preprint, University of Durham.

\bibitem{Spanier} Spanier, Edwin H.  Algebraic topology. Corrected reprint.
Springer-Verlag, New York-Berlin,  1981. xvi+528 pp. ISBN: 0-387-90646-0

\end{thebibliography}

\end{document}